
\documentclass[reqno]{amsart}
\usepackage{amssymb}
\usepackage{amsfonts}
\usepackage{paralist}
\usepackage{enumitem}
\usepackage{indentfirst}
\usepackage{amsmath}
\usepackage[integrals]{wasysym}
\usepackage{amsthm}
\usepackage{mathrsfs}
\usepackage{standalone}
\usepackage{color}
\usepackage{adforn}
\usepackage{comment}
\usepackage[bookmarks]{hyperref}
\usepackage{cleveref}
\usepackage{cancel}
\usepackage{faktor}
\usepackage{mdframed}
\usepackage[normalem]{ulem}
\usepackage{todonotes}

\crefformat{section}{\S#2#1#3} 
\crefformat{subsection}{\S#2#1#3}
\crefformat{subsubsection}{\S#2#1#3}
\crefrangeformat{section}{\S\S#3#1#4 to~#5#2#6}
\crefmultiformat{section}{\S\S#2#1#3}{ and~#2#1#3}{, #2#1#3}{ and~#2#1#3}

\newcommand{\T}{\mathbb{T}}
\newcommand{\R}{\mathbb{R}}
\newcommand{\Z}{\mathbb{Z}}

\newcommand{\TT}{\mathbb{T}^2}
\newcommand{\RR}{\mathbb{R}^2}
\newcommand{\ZZ}{\mathbb{Z}^2}
\newcommand{\half}{\frac{1}{2}}
\newcommand{\Leb}{\operatorname{Leb}}

\newcommand{\cF}{\mathcal{F}}

\newcommand{\cK}{\mathcal{K}}
\newcommand{\cA}{\mathcal{A}}
\newcommand{\tcA}{\tilde{\mathcal{A}}}

\newcommand{\tf}{\tilde{f}}
\newcommand{\tilh}{\tilde{h}}

\newcommand{\tU}{\tilde{U}}
\newcommand{\tgamma}{\tilde{\gamma}}

\newcommand{\tx}{\tilde{x}}
\newcommand{\ty}{\tilde{y}}

\newcommand{\tpsi}{\tilde{\psi}}

\newcommand{\tg}{\tilde{g}}
\newcommand{\tphi}{\tilde{\phi}}
\newcommand{\tp}{\tilde{p}}

\newcommand{\tq}{\tilde{q}}
\newcommand{\tldh}{\tilde{h}}
\newcommand{\cC}{\mathcal{C}}
\newcommand{\cU}{\mathcal{U}}
\newcommand{\hp}{\hat{p}}

\DeclareMathOperator{\SL2Z}{SL(2,\Z)}


\newtheorem{defi}{Definition}[section]
\newtheorem{thm}[defi]{Theorem}
\newtheorem*{thm*}{Theorem}
\newtheorem{prop}[defi]{Proposition} 
\newtheorem{prob}[defi]{Problem}
\newtheorem{lemma}[defi]{Lemma}
\newtheorem{cor}[defi]{Corollary}
\newtheorem{rmk}[defi]{Remark}
\newtheorem{ex}{Example}
\newtheorem{theorem}{Theorem}

\newtheorem{cor-thm}[defi]{Corollary}

\newtheorem*{sc-lemma}{Semicontinuity Lemma}
\newcommand{\A}{\mathbb{A}}

\newcommand{\tcF}{\widetilde{\mathcal{F}}}
\newcommand{\cL}{\mathcal{L}}
\newcommand{\tGamma}{\tilde{\Gamma}}
\newcommand{\id}{\operatorname{id}}

\newtheorem*{corA}{Corollary A}
\newtheorem*{corB}{Theorem B'}

\begin{document}

\title{Partially hyperbolic endomorphisms with expanding linear part}

\author[M. Andersson]{Martin Andersson} 

\address{Martin Andersson: Instituto de Matem\'{a}tica Aplicada. Universidade
	Federal Fluminense. Rua Professor Marcos Waldemar de Freitas Reis, S/N.
24210-201 Niter\'{o}i, Brazil.}

\thanks{M. Andersson was supported by CNPq/MCTI/FNDCT project 403041/2021-0, Brazil}

\email{nilsmartin@id.uff.br}

\author[W. Ranter]{Wagner Ranter} 

\address{Wagner Ranter: Instituto de Matem\'{a}tica. Universidade Federal de
Alagoas, Campus A.S. Simoes S/N, 57072-090. Macei\'o, Alagoas, Brazil.}

\email{wagnerranter@im.ufal.br}

\thanks{W. Ranter was supported by CNPq/MCTI/FNDCT project 409198/2021-8, Brazil.}


\begin{abstract}
	In this paper we study transitivity of partially
	hyperbolic endomorphisms of the two torus whose action in the first
	homology has two integer eigenvalues of moduli greater than one. We prove that if
	the Jacobian is everywhere greater than the modulus of the largest eigenvalue, then
	the map is robustly transitive. For this we introduce Blichfedt's
	theorem as a tool for extracting dynamical information from the action
	of a map in homology.

	We also treat the case of specially partially hyperbolic endomorphisms,
	for which we obtain a complete dichotomy: either the map is transitive
	and conjugated to its linear part, or its unstable foliation must
	contain an annulus which may either be wandering or periodic.
\end{abstract}

\maketitle


\section{Introduction}

Although it may now be long forgotten, dynamicists once believed
that diffeomorphisms with gradient-like dynamics (so-called Morse-Smale
systmes) make up a dense subset among diffeomorphisms on any compact manifold.
That should remind us about how striking the existence of robustly transitive
diffeomorphisms actually is. Recall that a diffeomorphism $f$ is
\emph{transitive} if it has a dense orbit, and  \emph{robustly transitive} if
there is a $C^1$ neigbourhood $\cU$ of $f$ such that every $g \in \cU$ is
transitive. The first examples of robustly transitive diffeomorphisms were
Anosov diffeomorphisms, and for some time it was believed that there were no
others. But in the 70's, Shub and Ma\~{n}\'{e} gave examples of robustly
transitive diffeomorphisms on $\T^4$ and $\T^3$ respectively that are not
Anosov. Both of these examples are homotopic to Anosov (i.e.
``derived-from-Anosov'') and partially hyperbolic. Partial hyperbolicity is not
a necessary condition for robust transitivity, but an even weaker form of
hyperbolicity (dominated splitting with uniform contraction/expansion in the
extreme bundles, see \cite{DPU,BDP}) is. In particular, in dimension three, any
robustly transitive diffeomorphism must have a non-trivial dominated splitting
with uniform expansion or contraction in the one-dimensional bundle. Until the
90's there were no known examples of robustly transitive diffeomorphisms which
are not homotopic to Anosov. That changed with the publication of
\cite{BD}, where a new tool called blender was introduced, 
allowing for a whole range of new examples.
Yet it still remains an open problem to describe and
classify all robustly transitive derived-from-Anosov diffeomorphisms, even on
$\T^3$.

In hindsight it may seem surprising that the research on this topic was born in
the context of invertible maps, since the simplest examples of robustly
transitive maps are actually uniformly expanding maps.
It is therefore natural to ask whether it is possible to
describe and classify robustly transitive "derived-from-expanding" maps, i.e.
maps which are robustly transitive and homotopic to an
expanding map whithout being themselves expanding. In a sense, it is a more
elementary problem to classify derived-from expanding maps on, say, $\TT$ than
the analogous problem for derived-from-Anosov diffeomorphisms on
$\T^3$ and we believe that the former is the right starting point for both problems.
This is because of the simpler topology present in the derived-from-expanding
case.
In fact, there is a strong analogy between uniformly expanding
maps and Anosov diffeomorphisms which becomes apparent by lifting a uniformly
expanding map to its natural extension in the inverse limit space. 
Similarily, there is a strong analogy between derived-from-expanding maps on
$\TT$ and derived-from-Anosov maps with a dominated splitting and a uniformly
contracted one-dimensional bundle.

In spite of their more straightforward topological description, linear expanding maps on  $\TT$ come in a greater
variety than linear Anosov maps on $\T^3$. Whereas the latter must have either
three real irrational eigenvalues or one irrational and a pair of complex ones,
the former allows for a pair of irrational, a pair of complex, or a pair of
integer eigenvalues. This paper is dedicated to this latter case. 

\begin{prob}
	Fix a linear expanding map $A$ on $\TT$ with integer eigenvalues. What
	are the robustly transitive maps homotopic to $A$?
\end{prob}

Note that every homotopy class contains maps with
attractors, which is an obvious obstacle to transitivity, so the robustly
transitive maps cannot make up the whole homotopy class. Something extra is needed.  In previous works we have considered this question for maps which are conservative \cite{A} or for which the non-wandering set is the whole of $\TT$ \cite{WR}. Both conditions serve to make sure the map has no attractors and are in fact sufficient for transitivity. A possible candidate for a weaker condition would be maps which are \emph{volume expanding}.  Indeed, a volume expanding map cannot have an attractor whose trapping region is \emph{inessential}, i.e. which does not wind around the torus. But even volume expanding maps may have attractors with \emph{essential} trapping regions. 

\begin{ex} \label{invariant_stripe_example}
	Let $F$ be the direct product of
	two maps $f,g : S^1 \to S^1 $, where $f(x) = 3 x \mod 1$ and $g(x)$ a map
	homotopic to $x \mapsto 2x \mod 1$, satisfying
	\begin{enumerate}
		\item $g(0) = 0$
		\item $g'(0) <1$
		\item $  \frac{2}{3} < g'(x) < 3, \ \forall x \in S^1$.
	\end{enumerate}
	Then $F$ has Jacobian larger than $2$ everywhere but is
	clearly not transitive. Indeed, $g$ has an attractor at $0$, so $F$ has
	an attractor with trapping region of the form $S^1 \times (-\epsilon,
	\epsilon)$ for some $\epsilon>0$. Once an orbit enter this region, it
	cannot escape.
\end{ex}

Our main finding is that when the map is 
\emph{partially hyperbolic} and has a \emph{sufficiently large} Jacobian, then
it is robustly transitive. Let us be more specific.  

In this paper, an \emph{endomorphism} is synonymous with non-invertible local diffeomorphism. 
A \emph{partially hyperbolic} endomorphism is a local
diffeomorphism $f:\mathbb{T}^2 \to
	\mathbb{T}^2$ admitting an unstable cone-field $\cC^u: p \mapsto
	\cC_p^u$, where  $\cC_p^u$ is a closed cone in
$T_{p}\mathbb{T}^2$, and constants $\ell>0$ and $\lambda >1$ satisfying:
\begin{enumerate}[label=(\roman*)]
	\item $\cC^u$ is $Df^{\ell}$-invariant, that is,
		\[
			Df_p^{\ell}\cC_p^u \subseteq
			\operatorname{int}\cC^u_{f^{\ell}(p)}\cup \{0\}
		\]
		where
	      $\operatorname{int}(\cC_p^u)$ denotes the interior of
	      $\cC_p^u$;
	\item for every $v \in \cC_p^u, \ \ \|Df^{\ell}(v)\|\geq
		      \lambda\|v\|$.
\end{enumerate}

The action of an endomorphism in the first homology group is given by a $2
\times 2$  matrix with integer entries. We refer to this matrix (and the maps
it induces on $\RR$ and $\TT$) as the \emph{linear part} of the endomorphism.

\begin{theorem}\label{thm-A}
	Let $f:\TT \to \TT$ be a partially hyperbolic endomorphism whose linear
	part $A$ has	integer eigenvalues $\lambda_1, \lambda_2$ with
	$|\lambda_1|\geq |\lambda_2|>1$. Suppose that
	\begin{align}\label{sv-exp}
		|\det (Df_p)|>|\lambda_1| \ \ \text{for every p} \, \in
		\mathbb{T}^2.
	\end{align}
	Then $f$ is transitive.
\end{theorem}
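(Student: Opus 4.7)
I would argue by contradiction. Assuming $f$ is not transitive, there is a nonempty proper open trapping region $\mathcal{T}\subsetneq\TT$ with $\overline{f(\mathcal{T})}\subset\mathcal{T}$; such a region is obtained from a non-dense forward orbit via a Conley-type construction or by extracting an attractor, with partial hyperbolicity supplying the needed structure. The plan is to derive a contradiction by analyzing lifts of $\mathcal{T}$ to the universal cover $\RR$.

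Since $A$ is expanding and $f$ is homotopic to $A$, the lift $\tf:\RR\to\RR$ is a global diffeomorphism. If $\mathcal{T}$ is \emph{inessential}, so that some lift $\widetilde{\mathcal{T}}$ is bounded with $\tf(\widetilde{\mathcal{T}})\subset\widetilde{\mathcal{T}}$, then
\[
	\vol\bigl(\tf(\widetilde{\mathcal{T}})\bigr)=\int_{\widetilde{\mathcal{T}}}|\det D\tf|\,dx\ge|\lambda_1|\vol(\widetilde{\mathcal{T}})>\vol(\widetilde{\mathcal{T}}),
\]
contradicting containment. Hence $\mathcal{T}$ is essential, and since $f_*=A$ on $H_1(\TT)=\ZZ$, the core homology class of $\mathcal{T}$ must be fixed up to scalar by $A$, i.e.\ an eigenvector; with both eigenvalues integral, this class is a multiple of either $e_1$ (eigenvalue $\lambda_1$) or $e_2$ (eigenvalue $\lambda_2$).

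If $\mathcal{T}$ wraps around $e_2$, I rule it out using the cone $\cC^u$: the unstable direction of $f$ lies close to $e_1$, so for any unstable arc $\gamma\subset\mathcal{T}$, the iterate $f^n(\gamma)$ has length $\ge\lambda^n\operatorname{length}(\gamma)$ with direction bounded away from $\mathbb{R}e_2$, forcing its $e_1$-extent to exceed the bounded $e_1$-width of $\mathcal{T}$ for large $n$, contradicting $f^n(\gamma)\subset\mathcal{T}$.

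The main step, where Blichfeldt enters, is $\mathcal{T}$ wrapping around $e_1$. Let $\widetilde{\mathcal{T}}_0$ be a fundamental piece of $\widetilde{\mathcal{T}}$ for the $\Z e_1$-deck action, so that $\pi:\widetilde{\mathcal{T}}_0\to\mathcal{T}$ is a measure-preserving bijection. By compactness there is $\varepsilon>0$ with $|\det Df|\ge|\lambda_1|(1+\varepsilon)$, so
\[
	\vol\bigl(\tf^n(\widetilde{\mathcal{T}}_0)\bigr)=\int_{\mathcal{T}}|\det Df^n|\,dx\ge\bigl(|\lambda_1|(1+\varepsilon)\bigr)^n\vol(\mathcal{T}).
\]
Blichfeldt's theorem then produces a point $y\in\TT$ with at least $(|\lambda_1|(1+\varepsilon))^n\vol(\mathcal{T})$ preimages under $f^n$ lying in $\mathcal{T}$. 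For the opposite bound, write $\tf^n=A^n+\phi_n$ with $\phi_n$ a $\ZZ$-periodic error of sup-norm $O(|\lambda_1|^n)$; then the $e_1$-extent of $\tf^n(\widetilde{\mathcal{T}}_0)$ is $O(|\lambda_1|^n)$, and since $\mathcal{T}$ has width less than one, each $e_1$-fundamental domain in $\widetilde{\mathcal{T}}$ contains at most one point of any $\ZZ$-coset, giving $|f^{-n}(y)\cap\mathcal{T}|=O(|\lambda_1|^n)$. The two bounds together force $(1+\varepsilon)^n\vol(\mathcal{T})=O(1)$, absurd as $n\to\infty$. The main obstacle is likely the clean extraction of a trapping region from bare non-transitivity; once in place, the essential/inessential dichotomy and the Blichfeldt-based preimage count proceed cleanly.
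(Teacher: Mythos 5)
There is a genuine gap at the very first step, and it is fatal to the overall strategy. Non-transitivity does \emph{not} yield an open trapping region $\mathcal{T}$ with $\overline{f(\mathcal{T})}\subset\mathcal{T}$: Conley-type constructions produce attractors from failure of \emph{chain} recurrence, not from failure of transitivity, and a map with no proper attractor can still be non-transitive (the obstruction can be a wandering open set; indeed, in the closely related setting of Theorem~\ref{thm-B} the paper shows that the failure of transitivity may come from a \emph{wandering} annulus, which produces no trapping region at all). What you do get from non-transitivity is only a proper open forward-invariant set $W=\bigcup_{n\ge 0}f^n(U)$, for which $f(W)\subset W$ but not $\overline{f(W)}\subset W$, and your volume estimates then at best exclude attractors, i.e.\ they prove something like chain transitivity, which is strictly weaker than the conclusion of Theorem~\ref{thm-A}. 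Beyond this, the case analysis is incomplete or unsupported: a doubly essential trapping set is never discussed (volume expansion alone does not exclude it, since on $\TT$ the image volume is only bounded below after dividing by the degree $|\lambda_1\lambda_2|$); the claim that the core class must be a multiple of $e_1$ or $e_2$ presupposes an integral diagonalization of $A$, which an $\SL2Z$ change of coordinates does not always provide (and when $\lambda_1=\lambda_2$ with a Jordan block there is only one eigendirection); and the ``$e_2$-wrapping'' case is dismissed by asserting that the unstable cone lies close to the $\lambda_1$-eigendirection, a statement that is genuinely nontrivial for (non-specially) partially hyperbolic endomorphisms and is only established in the paper, with real work, in the specially partially hyperbolic setting of \cref{section2}. (Ironically, your volume-versus-thin-strip estimate for the $e_1$ case works for any essential, non--doubly essential invariant strip regardless of its direction, so the cone argument is not needed there; also, the ``width less than one'' count should be replaced by a fixed constant, since an essential open set need not have transverse width below one.)

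That said, you did identify the correct engine: the tension between volume growth at rate $>|\lambda_1|^n$ and the diameter bound $O((|\lambda_1|+\epsilon)^n)$ coming from $\|\tf^n-A^n\tldh\|\le\kappa$, mediated by Blichfeldt/lattice counting, is exactly the content of Lemma~\ref{doubly-essential-lift}. The paper, however, runs it forwards rather than by contradiction: it shows that \emph{every} open set has a doubly essential iterate, then uses partial hyperbolicity only through the Hall--Hammerlindl theorem (no periodic center annulus, hence leaf conjugacy to $A$) to put $f$ in skew-product form over $x\mapsto\lambda_1 x$, where a lifted image containing two points differing by a multiple of $e_2$ must contain a full vertical circle, whose strip then covers $\TT$ after finitely many further iterates. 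This gives the locally-eventually-onto property directly, bypassing entirely the (unavailable) reduction of non-transitivity to the existence of an attractor; to repair your proposal you would need to replace the trapping-region framework by an argument of this direct type.
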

Condition \eqref{sv-exp} says that the Jacobian of $f$ at every point is larger
than the spectral radius of the linear part of $f$. It can be slightly relaxed
by asking that it holds on an iterate of $f$ or, equivalently, that there is
some $C>0$ and $\lambda>\lambda_1$ such that $|\det(Df_p^n)| \geq C\lambda^n$
for every $n \geq 1$ and every $p \in \TT$. We say that an endomorphism with
this property is
\emph{strongly volume expanding}.

It should be noted that partial hyperbolicity and the strongly volume expanding condition are both persistent under $C^1$-perturbations.  As a consequence:

\begin{corA} \label{corollaryA}
	Suppose that $f$ is a partially hyperbolic endomorphism whose linear part is expanding with integer eigenvalues. If $f$ is strongly volume expanding, then $f$ is $C^1$ robustly transitive.
\end{corA}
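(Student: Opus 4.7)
The plan is to deduce the corollary directly from Theorem A (in its iterate form described in the remark after the theorem) by showing that each of the three hypotheses of that theorem is an open condition in the $C^1$ topology. Concretely, I want to find a $C^1$-neighbourhood $\mathcal{U}$ of $f$ such that every $g \in \mathcal{U}$ is partially hyperbolic, has the same linear part $A$, and admits an iterate $g^N$ with $|\det Dg^N_p| > |\lambda_1|^N$ for every $p$. Applying Theorem A to $g^N$ then gives transitivity of $g^N$, which immediately implies transitivity of $g$.

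Two of the three conditions are essentially immediate. First, partial hyperbolicity is a cone-field condition and, being defined by a strict open inequality, survives under small $C^1$-perturbations; one simply shrinks the cone field a bit and checks that the invariance and expansion properties persist. Second, the linear part of an endomorphism of $\mathbb{T}^2$ is its induced action on $H_1(\mathbb{T}^2;\mathbb{Z})$, which is a homotopy invariant of $f$ and hence locally constant in the $C^0$ (a fortiori $C^1$) topology. In particular, every $g$ sufficiently close to $f$ has the same linear part $A$, and so automatically inherits the integer eigenvalues $\lambda_1,\lambda_2$ with $|\lambda_1|\geq |\lambda_2|>1$.

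The substantive point is the openness of the strongly volume expanding condition. By hypothesis, there exist $C>0$ and $\lambda>|\lambda_1|$ with $|\det Df^n_p|\geq C\lambda^n$ for every $n\geq 1$ and every $p\in\mathbb{T}^2$. Fix $N$ so large that $C\lambda^N > (|\lambda_1|+\delta)^N$ for some $\delta>0$, which is possible because $\lambda/|\lambda_1|>1$. By compactness of $\mathbb{T}^2$, we then have
\[
	\min_{p\in\mathbb{T}^2}|\det Df^N_p|\;>\;(|\lambda_1|+\delta)^N.
\]
Since the map $g \mapsto Dg^N$ is continuous from $C^1(\mathbb{T}^2,\mathbb{T}^2)$ to the space of continuous sections of $\operatorname{End}(T\mathbb{T}^2)$, the function $g\mapsto\min_p|\det Dg^N_p|$ is continuous in $g$ in the $C^1$ topology, so the same strict inequality holds uniformly in $p$ for every $g$ in a small enough $C^1$-neighbourhood $\mathcal{U}$ of $f$.

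To finish, given $g\in\mathcal{U}$ I apply Theorem A to $g^N$: it is partially hyperbolic (with the same cone-field constants up to iteration), its linear part is $A^N$, which has integer eigenvalues $\lambda_1^N,\lambda_2^N$ of moduli $>1$ with spectral radius $|\lambda_1|^N$, and $|\det Dg^N_p|>|\lambda_1|^N$ for all $p$. Theorem A yields transitivity of $g^N$, hence of $g$. The only point requiring a little care is the uniform choice of $N$ in the volume-expansion argument; once compactness of $\mathbb{T}^2$ is used to promote the pointwise condition to a uniform one with a margin $\delta$, the rest is routine persistence.
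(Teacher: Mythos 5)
Your argument is correct and is essentially the paper's own: the authors justify Corollary A by noting that partial hyperbolicity and strong volume expansion are $C^1$-persistent (with the linear part locally constant), and then invoke Theorem~\ref{thm-A} in the iterate form described right after its statement. Your extra details --- fixing $N$ with a margin $\delta$ via compactness, passing to $g^N$ whose linear part $A^N$ has integer eigenvalues of moduli greater than one, and deducing transitivity of $g$ from that of $g^N$ --- are exactly the routine verifications the paper leaves implicit.
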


Theorem \ref{thm-A} is similar in flavour to a theorem by Hertz, Ures and Yang
\cite{MR4419061} about partially hyperbolic diffeomorphisms on $\T^3$. Using
the hypothesis that $f$ is $C^2$ and a slightly weaker version of
\eqref{sv-exp} (they allow for equality in \eqref{sv-exp} in a set with zero
leaf volume along unstable leaves), they conclude that the strong stable and
unstable foliations are $C^1$ robustly minimal, which in particular implies
$C^1$ robust transitivity. Here we require less regularity but a
slightly stronger condition on the Jacobian than that of \cite{MR4419061}.
Notwithstanding the apparent similarities, the approaches taken in the two works
are very different. The argument in \cite{MR4419061} relies on the existence of
positive Lyapunov exponents in the center direction and makes thorough use of
the partially hyperbolic structure. In contrast, the present work applies
Blichfedt's Theorem to show that the strongly volume expanding conditions has
a rather far reaching topological consequence: a sufficiently high iterate of
any open set must wind around the torus in two directions (Lemma
\ref{doubly-essential}).  This is entirely
independent of the map being partially hyperbolic or not and is of independent
interest. Partial hyperbolicity is used to guarantee that this property indeed
implies transitivity.

\subsection{Specially partially hyperbolic endomorphisms}
\label{special}
Whenever $f$ is a partially hyperbolic endomorphism, we may define the
\emph{center direction} at a point $p$ by
\[E_p^c = \{ v \in T_x \TT: Df_p^n( v) \notin \cC^u(f^n(p)) \ \forall n \geq 0
	\} \cup \{ 0\}.\]
However, in contrast to the invertible case, there
may not be a well defined unstable direction.
More precisely, given a choice of pre-orbit $\hp = ( \ldots, p_{-2}, p_{-1},
	p_0)$ of $p$, i.e. a sequence of points in $\TT$ satisfying $p_0 = p$ and
$f(p_{i-1}) = p_i$ for every $i \geq 0$, we define the direction
\begin{equation} \label{unstable_direction}
	\hat{E}_{\hp}^u  = \bigcap_{n \geq 0} Df^n(\cC^u(p_n)) .
\end{equation}

In general, $\hat{E}_{\hp}^u$ will depend on the particular choice of pre-orbit
$\hp$. In the exceptional case where it doesn't, we say that
$f$ is a \emph{specially partially hyperbolic endomorphism} and write $E_p^u =\hat{E}_{\hp}^u$. In this case, $E_p^u$ can easily be shown to be $f$-invariant
and continuous.

For specially partially hyperbolic endomorphisms we are able to give a full
characterization of transitivity both in terms of conjugacy and in terms of
absence of periodic or wandering annuli. By an \textit{annulus} we mean an open
subset $\mathbb{A}$ of $\mathbb{T}^2$ homeomorphic to $(-1,1)\times
S^1$. We say that an annulus $\mathbb{A}$ is \textit{periodic} if
there is $n\geq 1$ such that $f^n(\mathbb{A})=\mathbb{A}$; and  it is
\textit{wandering} if $f^n(\mathbb{A})\cap \mathbb{A}=\emptyset$ for every $n
\geq 1$. 

\begin{theorem}\label{thm-B} Let $f$ be a specially partially hyperbolic
	endomorphism with linear part $A$. Suppose that $A$ has integer eigenvalues
	$|\lambda_1| > |\lambda_2| >1$.  Then the following are equivalent:
	\begin{enumerate}[label=\rm{\alph*)}]
		\item $f$ is transitive; \label{f_transitive}
		\item $f$ topologically conjugated to $A$; \label{f_conjugated}
		\item $f$ admits neither a periodic nor a wandering annulus. \label{f_no_annulus}
	\end{enumerate}
\end{theorem}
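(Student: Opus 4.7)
I would prove the three equivalences via the cycle $(b)\Rightarrow (a)\Rightarrow (c)\Rightarrow (b)$. The first step $(b)\Rightarrow (a)$ is immediate: the expanding linear map $A$ is topologically mixing, hence transitive, and topological conjugacy preserves transitivity. For $(a)\Rightarrow (c)$, pick $x_0$ with dense forward orbit. If $\mathbb{A}$ were a wandering annulus, density would force two times $n_1<n_2$ with $f^{n_i}(x_0)\in\mathbb{A}$, whence $f^{n_2-n_1}(\mathbb{A})\cap\mathbb{A}\ni f^{n_2}(x_0)$, contradicting wandering. For a periodic annulus $\mathbb{A}$ of period $n$, the set $B=\bigcup_{k=0}^{n-1}f^k(\mathbb{A})$ is open and satisfies $f(B)=B$; orbits entering $B$ are trapped there, so transitivity would force $B$ dense in $\TT$, but a homotopical analysis of the self-covering $f^n\vert_{\mathbb{A}}$ together with the expansion of $f$ along the transverse direction rules that out.

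The substantive implication is $(c)\Rightarrow (b)$. A standard Franks-type lifting argument on $\R^2$ produces a continuous surjection $h:\TT\to\TT$ homotopic to the identity satisfying $h\circ f=A\circ h$, with $\tilh-\id$ bounded. For specially partially hyperbolic $f$ the unstable bundle $E^u$ is well-defined and continuous; it and the complementary center bundle $E^c$ integrate (by the uniqueness of one-dimensional integrable distributions) to transverse $f$-invariant foliations $\mathcal{F}^u$ and $\mathcal{F}^c$. If $\tilh(\tilde p)=\tilh(\tilde q)$, then $\tilh\circ \tf^n$ agrees at $\tilde p$ and $\tilde q$ for all $n\geq 0$, and boundedness of $\tilh-\id$ yields $|\tf^n(\tilde p)-\tf^n(\tilde q)|\leq C$ uniformly in $n$. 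Any displacement with a strong unstable component would be amplified at rate $\lambda_1^n$, so $\tilde p$ and $\tilde q$ lie on a common leaf of $\tilde{\mathcal{F}}^c$. Hence every fiber $h^{-1}(y)$ is a compact connected subset of a single center leaf of $f$, namely a point or a closed arc.

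Suppose for contradiction that $h$ is not injective, so some fiber is a nontrivial arc $I$ in a center leaf $L$, with $h(I)=\{y\}$. Since $A$ has integer eigenvalues, both eigendirections of $A$ are rational; in particular $A$'s center and unstable foliations consist of parallel circles on $\TT$, and the center leaf space is $S^1$ on which $A$ acts as multiplication by $\lambda_1$. By a standard quasi-isometric comparison with $\tA$ on $\R^2$, the unstable leaves of $\tf$ project to embedded circles in $\TT$. Saturating the interior of $I$ by these unstable circles yields an embedded open annulus
\[
\mathbb{A}_I=\bigcup_{p\in\operatorname{int}(I)}\mathcal{F}^u(p),
\]
bounded by the unstable circles through the two endpoints of $I$. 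Its forward iterates satisfy $f^k(\mathbb{A}_I)=\mathbb{A}_{f^k(I)}$, the annuli associated to the collapsed arcs $f^k(I)\subset h^{-1}(A^k(y))$. If $y$ is eventually periodic under $\times\lambda_1$ on the leaf space of the $A$-center foliation, the sequence of annuli is eventually periodic, producing a periodic annulus; otherwise they sit in pairwise distinct center leaves of $f$ and are disjoint, giving a wandering annulus. Either outcome contradicts (c), so $h$ is injective and hence a conjugacy from $f$ to $A$.

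The principal obstacle is the embeddedness of $\mathbb{A}_I$: one must rule out that unstable leaves through two distinct points of $I$ ever coincide in $\TT$. This rests on the specialness hypothesis (a well-defined $E^u$ independent of the preorbit) together with the rigid product structure transferred via $h$ from the foliations of $A$ by parallel rational circles. A secondary delicate point is the periodic annulus case in $(a)\Rightarrow(c)$, where the non-invertibility of $f$ complicates the usual forward-invariance arguments and requires exploiting the homotopy type of the self-covering $f^n\vert_{\mathbb{A}}$.
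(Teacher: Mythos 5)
Your overall architecture matches the paper's ((b)$\Rightarrow$(a) trivial; (a)$\Rightarrow$(c) via wandering sets plus a homotopical degree argument; (c)$\Rightarrow$(b) via the Franks semiconjugacy $h$, showing its fibers are arcs in center leaves and that a nontrivial fiber generates a forbidden annulus saturated by unstable leaves). But the central step of (c)$\Rightarrow$(b) has a genuine gap. You assert that ``the unstable leaves of $\tf$ project to embedded circles in $\TT$'' by a ``standard quasi-isometric comparison with $A$,'' and you identify the remaining difficulty as the embeddedness of $\A_I$. That is not the real difficulty. First, one must rule out Reeb components of $\cF^u$; the paper does this by noting that a Reeb annulus would be \emph{backward} invariant under some $f^{-n}$, and that a backward-invariant annulus forces the linear part to have an eigenvalue $\pm 1$ (an index computation on $\pi_1$), contradicting expansiveness. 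Second, even after $\cF^u$ is known to be a suspension of rational slope, its leaves need \emph{not} all be circles: the foliation may contain a tannulus, in which case the leaves through points of your arc $I$ can be non-compact lines accumulating on circle leaves, and your saturated set $\A_I$ is not an annulus bounded by two unstable circles. The paper resolves this by a case split: if $\cF^u$ has a tannulus, that tannulus is itself an open annulus whose iterates either coincide with it or are disjoint from it, so it is already a periodic or wandering annulus contradicting (c); only in the no-tannulus case are all unstable leaves circles, and only then does the saturation of a nontrivial fiber $\phi(p)$ (namely $h^{-1}(\cA^u(h(p)))$) give the annulus you describe. Your argument silently assumes the second case always holds.

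Two further points need repair. Your claim that $E^c$ integrates ``by the uniqueness of one-dimensional integrable distributions'' is not valid: continuous line fields need not be uniquely integrable, and specially partially hyperbolic endomorphisms can be dynamically incoherent (the paper cites such an example with non-expanding linear part). Dynamical coherence here comes from the Hall--Hammerlindl classification together with the fact that a specially partially hyperbolic endomorphism with expanding linear part admits no periodic center annulus. Finally, in (a)$\Rightarrow$(c) your treatment of a periodic annulus defers the contradiction to an unspecified ``homotopical analysis.'' The paper's actual mechanism is worth knowing: transitivity forces $f^{-n}(\A)=\A$ (otherwise $f^{-n}(\A)$ contains a wandering annulus), and for a backward-invariant annulus the restriction $f^n|_{\A}$ is a self-cover of degree $\lambda_1^n\lambda_2^n$ while the induced index on the rank-one subgroup $i_\star\pi_1(\A)\subset\ZZ$ is only $\lambda_i^n$, forcing the other eigenvalue to be $\pm 1$. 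Without this (or an equivalent argument), the implication is not established.
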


When they exist, periodic and wandering annuli are necessarily saturated by
unstalbe leaves. We can therefore restate Theorem~\ref{thm-B} as:

\begin{corB} Let $f$ be a specially partially hyperbolic
	endomorphism with linear part $A$ having eigenvalues
	$|\lambda_1| > |\lambda_2| >1$.  Then one of the following holds:
	\begin{enumerate}[label=\rm{\alph*)}]
		\item $f$ is transitive and topologically conjugated to $A$;
			\label{not_trans}
		\item $f$ is not transitive and there is a periodic or wandering
			annulus saturated by the unstable foliation.
			\label{no_saturated_annulus}
	\end{enumerate}
\end{corB}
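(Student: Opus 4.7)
Corollary~B' is an immediate consequence of Theorem~\ref{thm-B} once we verify the auxiliary fact that every periodic or wandering annulus of $f$ is automatically saturated by the unstable foliation $W^u$. Granted this fact: if $f$ is transitive, Theorem~\ref{thm-B} implies $f$ is topologically conjugate to $A$, which is exactly \ref{not_trans}; otherwise, Theorem~\ref{thm-B} produces a periodic or wandering annulus, which the auxiliary fact upgrades to a $W^u$-saturated one, giving \ref{no_saturated_annulus}. The whole argument therefore reduces to the saturation claim.

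To prove saturation, suppose first that $\mathbb{A}$ is periodic with $f^n(\mathbb{A}) = \mathbb{A}$, lift it to a strip $\widetilde{\mathbb{A}} \subset \R^2$, and let $v \in \Z^2$ denote the primitive class of its core. Since $f^n$ acts on $\pi_1(\mathbb{A}) \cong \Z$ as multiplication by some integer $k$, which must agree with $A^n$ in absolute homology, we get $A^n v = k v$, forcing $v$ to be an eigenvector of $A$. Because $\lambda_1 \neq \lambda_2$ are integers, the two eigenlines of $A$ are rational, spanned by integer vectors $v_1, v_2$, so $v$ is parallel to one of them. The case $v \parallel v_2$ is ruled out by partial hyperbolicity: then $\widetilde{\mathbb{A}}$ has bounded extent in the $v_1$-direction, while any small unstable arc $\sigma \subset \mathbb{A}$ iterated by $f^{nk}$ stays in $\mathbb{A}$ and expands without bound along $E^u$ (a cone about $v_1$), contradicting the bounded strip width. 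Hence $v \parallel v_1$, and the core of $\mathbb{A}$ is asymptotically parallel to every unstable leaf. The wandering case is treated analogously, with disjointness of the $A^n$-translates of $\widetilde{\mathbb{A}}$ playing the role of $A^n$-invariance.

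The remaining step is to preclude drift of unstable leaves through $\mathbb{A}$ in the $v_2$-direction, so that $W^u(p) \subset \mathbb{A}$ for every $p \in \mathbb{A}$. Exploiting $f(W^u(p)) = W^u(f(p))$ together with the $f^n$-periodicity of $\mathbb{A}$, one argues that any point $q \in W^u(p) \setminus \mathbb{A}$ with $p \in \mathbb{A}$ would, upon pull-back along a pre-orbit of $p$ lying in $\mathbb{A}$, produce $v_2$-displacements on $W^u$ that violate the exponential contraction of the $v_2$-component along unstable leaves under $f^{-n}$. This confinement step is the main technical point; I expect it to follow from the quantitative estimates on how unstable leaves of $f$ track the linear $v_1$-direction, which are available here by combining special partial hyperbolicity with the integrality of the dominant eigenvalue $\lambda_1$.
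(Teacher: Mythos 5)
Your reduction rests on the auxiliary claim that \emph{every} periodic or wandering annulus of $f$ is saturated by the unstable foliation, and that claim, taken literally, is false in this setting: an ``annulus'' here is just an open set homeomorphic to $(-1,1)\times S^1$, with no essentiality assumption. In Example~\ref{invariant_stripe_example} (which is specially partially hyperbolic, with horizontal unstable circles $S^1\times\{y\}$) take $y_0>0$ small in the basin of the attracting circle and a short interval $J\ni y_0$ whose forward $g$-images are all disjoint from $J$; any homotopically trivial annulus contained in a small disk inside $S^1\times J$ is then a wandering annulus, yet it contains no unstable leaf, so it is not $\cF^u$-saturated. This also shows that your homology step (core class $v$ with $A^nv=kv$, hence $v$ an eigenvector) does not even get started for wandering annuli, whose core class may be $0$; and, independently of this, the decisive ``confinement step'' — the actual inclusion $\cF^u(p)\subset\mathbb{A}$ for $p\in\mathbb{A}$ — is explicitly left unproved in your proposal (``I expect it to follow\dots''), so even for essential periodic annuli the saturation is not established.

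What the corollary needs is only the \emph{existential} statement, and the paper obtains it by inspecting the construction in the proof of Theorem~\ref{thm-B} rather than by upgrading an arbitrary annulus. If $f$ is not transitive, then $h$ is not injective, i.e.\ $\phi(p)\neq\{p\}$ for some $p$, and there are two cases. Either $\cF^u$ contains a tannulus: this is by definition an annulus that is a union of unstable leaves, its images under $f^n$ are again tannuli, and since two such annuli either coincide or are disjoint it is periodic or wandering. Or $\cF^u$ has no tannulus, hence is a foliation by circles, and Lemma~\ref{cor:stripe} produces the annulus as $\operatorname{int} h^{-1}(\cA^u(h(p)))=\pi^{-1}(\pi(\phi(p)))$, a union of fibers of the fibration $\pi$ whose fibers are exactly the unstable circles. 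In both cases saturation holds \emph{by construction}, which is the content behind the sentence preceding the corollary in the paper. The fix for your argument is therefore not to prove a universal saturation statement (which fails), but to replace it by this case analysis coming out of the proof of Theorem~\ref{thm-B}.
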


Note that, in virtue of being a direct product,
Example~\ref{invariant_stripe_example} is in fact specially partially
hyperbolic, so it serves as an example for the non-transitive case in
Theorems~\ref{thm-B} (and B'). In that example, the origin is an attractor for
$g$ whose basin is a union of intervals. If $I$ is the interval that contains
$0$, then $\T \times I$ is a periodic (in fact fixed) annulus.


\subsection*{Acknowledgments}

We would like to thank Rafael Potrie and Enrique Pujals for their fruitful
comments suggestions.


\section{Some Preliminaries } \label{section1}

An endomorphism $f: \TT \to \TT$ induces an action $f_\star$ on $\pi_1(\TT)$.
Since $\pi_1(\TT)$ is isomorphic to $\ZZ$, this action can e represented by a
$2 \times 2$ integer matrix $A$. Now, $A$ itself induces an endomorphism on
$\TT$, called a \emph{linear endomorphism}. Each endomorphism is homotopic to
one and only one such linear endomorphism, which we refer to as the
\emph{linear part} of $f$. One good reason for this is that if $\tf: \RR \to \RR$ is a
lift of $f$, then
\begin{equation}
	\tf(\tx+v) = \tf(\tx)+Av
	\label{lift_property}
\end{equation}
for every $\tx \in \RR$ and every $v \in \ZZ$. In particular, $\tf$ can be
neatly decomposed as $A + (\tf-A)$, where $\tf-A$ is $\ZZ$-periodic and hence
bounded. 

A linear map $A$ on $\RR$ is called \emph{expanding} when all its eigenvalues
have magnitude larger than one. In the case where the linear part $A$ of $f$
is expanding, there is a surjective
continuous map $h:\mathbb{T}^2
\to \mathbb{T}^2$, homotopic to the identity, such that
\begin{align} \label{semiconj}
	h\circ f= A\circ h.
\end{align}
The existence of $h$ was proved by Franks in \cite{MR0271990} for
diffeomorphisms with hyperbolic linear part, but the proof can be easily
adapted to endomorphisms with expanding linear part. (We remark that if the
linear part is a hyperbolic endomorphism, such a map may not exist. See
\cite{2104.01693}.) The map $h$ is called a \textit{semiconjugacy} from $f$ to
$A$. When $h$ is a homeomorphism we say that it is  a \textit{conjugacy}
between $f$ and $A$.

One of the consequences of the existence of the semi-conjugacy is that $\tf^n$
and $A^n \circ \tldh$ stay uniformly close. Indeed, if $\tldh$ is a lift of $h$,
then $\tldh - \id$ is $\ZZ$-periodic (since $h$ is homotopic to the identity)
and hence bounded by some constant, say $\kappa$. But $A^n (\tldh(\tx)) =
\tldh(\tf^n(\tx))$ so that 
\begin{equation}\label{bounded_distance}
	\|\tf^n(\tx)-A^n(\tldh(\tx))\|   <  \kappa
\end{equation}
for every $\tx \in \RR$ and every $n \geq 1$. 

It is sometimes useful to consider the set-valued function
\begin{align}
	\phi: \TT & \to \cK(\TT) \\
	x & \mapsto h^{-1}(h(x))
\end{align}
and its lift $\tphi(\tx) = \tldh^{-1}(\tldh(\tx))$. Here $\cK(\TT)$ denotes the class of compact subsets of $\TT$. The set $\tphi(\tx)$ is the set of points whose forward orbit stays a bounded distance away from the orbit of $\tx$ under iterations of $\tf$, i.e.
\[\tphi(\tx) = \{\ty \in \RR: \sup_{n \geq 0} \|\tf^n(\tx)-\tf^n(\ty)\|\} < \infty \}.\]

\begin{prop}\label{prop-Phi}
	Let $f: \TT \to \TT$ be an endomorphism with expanding linear part $A$
	and $\tf$ a lift of $f$. Then the following hold: 
	\begin{enumerate}[label=$\mathrm{(\alph*)}$]
		\item \label{characterization} There is $r>0$ such that
		      \[\tphi(\tx) =\bigcap_{k \geq 0}
			      \tf^{-n_k}(B(\tf^{n_k}(\tilde{x}),r)),\]
		      for each $\tilde{x} \in \mathbb{R}^2$ and each sequence
		      $n_k \to \infty$.
		\item \label{growing_ball} There exists $r_0$ and $k\geq 1$ such that
		      $\tf^k(B(\tx, r)) \supset \overline{B(\tf^k (\tx), r)}$
		      for every $\tx \in \RR$ and $r>r_0$, where $B(\tx, r)$
		      is the ball of radius $r$ centred at $\tx$.
		\item \label{tphi_connected} For each $\tilde{x} \in \mathbb{R}^2$,
		      $\tphi(\tilde{x})$ is a connected set.
		\item \label{tldh_connected} For each $\tx$, $\tldh^{-1}(\tx)$ is connected.
		\item \label{preimage_of_connected} For each compact connected
		      set $\mathcal{C}$ in $\mathbb{T}^2$, the set
		      $\tldh^{-1}(\mathcal{C})$ is connected.
	\end{enumerate}
\end{prop}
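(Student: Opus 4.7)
The plan is to prove items (a)--(e) in turn, exploiting three structural facts: that $\tf:\RR\to\RR$, as a proper local diffeomorphism between simply connected manifolds, is in fact a \emph{homeomorphism}; the semi-conjugacy identity $\tilde{h}\circ\tf=A\circ\tilde{h}$; and the uniform bound $\|\tilde{h}-\id\|\le\kappa$, equivalent to \eqref{bounded_distance}. Throughout I fix a norm on $\RR$ for which $A$ expands by a factor $\lambda>1$.

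For (a), if $\tilde{h}(\ty)=\tilde{h}(\tx)$ then $A^n\tilde{h}(\ty)=A^n\tilde{h}(\tx)$, so two applications of \eqref{bounded_distance} give $\|\tf^n(\ty)-\tf^n(\tx)\|<2\kappa$ for every $n$; thus $\tphi(\tx)\subseteq\bigcap_k\tf^{-n_k}(B(\tf^{n_k}\tx,r))$ for every $r>2\kappa$. Conversely, if $\ty$ lies in that intersection then $\|A^{n_k}(\tilde{h}(\ty)-\tilde{h}(\tx))\|\le r+2\kappa$, and dividing by $\lambda^{n_k}\to\infty$ forces $\tilde{h}(\ty)=\tilde{h}(\tx)$. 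For (b), given $\tilde{z}\in\overline{B(\tf^k\tx,r)}$, set $\ty:=\tf^{-k}(\tilde{z})$ (defined because $\tf$ is a homeomorphism); the same double application of \eqref{bounded_distance} yields $\|\tilde{h}(\ty)-\tilde{h}(\tx)\|\le\lambda^{-k}(r+2\kappa)$, hence by the triangle inequality $\|\ty-\tx\|\le2\kappa+\lambda^{-k}(r+2\kappa)$; this is $<r$ whenever $k$ is chosen large and $r$ exceeds some threshold $r_0=O(\kappa)$.

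For (c), pick $r>\max(r_0,2\kappa)$ and $k$ as in (b) and let $U_n:=\tf^{-n}(B(\tf^n\tx,r))$. Each $U_n$ is the homeomorphic image of an open ball, hence open and connected; applying (b) at the point $\tf^n\tx$ shows $U_{n+k}\subseteq U_n$, and in particular $U_{jk}\subseteq U_k\subseteq B(\tx,r)$ for every $j\ge 1$. The closures $\overline{U_{jk}}$ therefore form a decreasing chain of compact connected sets whose intersection is connected; that this intersection coincides with $\tphi(\tx)$ follows from (a) together with continuity of $\tilde{h}$ and the expansion argument used in (a). For (d), $\tilde{h}$ is surjective (a Brouwer fixed-point argument applied to $\ty\mapsto\ty+\tx-\tilde{h}(\ty)$ on a large ball), so picking any $\ty\in\tilde{h}^{-1}(\tx)$ gives $\tilde{h}^{-1}(\tx)=\tphi(\ty)$, which is connected by (c).

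For (e), interpret $\tilde{h}^{-1}(\mathcal{C})$ as the preimage under $\tilde{h}$ of a connected lift $\tilde{\mathcal{C}}\subseteq\RR$ of $\mathcal{C}$, and assume for contradiction a separation $\tilde{h}^{-1}(\tilde{\mathcal{C}})=V\sqcup W$ into nonempty relatively clopen pieces. By (d), every fiber $\tilde{h}^{-1}(\ty)$ for $\ty\in\tilde{\mathcal{C}}$ is connected, so it lies entirely in $V$ or entirely in $W$, inducing a partition $\tilde{\mathcal{C}}=\tilde{\mathcal{C}}_V\sqcup\tilde{\mathcal{C}}_W$. Properness of $\tilde{h}$ (immediate from $\|\tilde{h}-\id\|\le\kappa$) makes the set-valued map $\ty\mapsto\tilde{h}^{-1}(\ty)$ upper semi-continuous with compact values, which forces both $\tilde{\mathcal{C}}_V$ and $\tilde{\mathcal{C}}_W$ to be open in $\tilde{\mathcal{C}}$, contradicting its connectedness. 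I expect the main obstacle to be item (c): nestedness of the $U_{jk}$, connectedness of each $U_n$, and the identification of the intersection with $\tphi(\tx)$ must all be assembled together, and without the homeomorphism property of $\tf$ one would be forced to track specific connected components of preimages and bound them separately.
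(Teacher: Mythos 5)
Your argument is correct, and for items \ref{characterization}--\ref{tldh_connected} it follows essentially the same route as the paper: the $2\kappa$ estimate from the semiconjugacy for both inclusions in \ref{characterization}, the contraction of $\tf^{-k}$ modulo $\kappa$ for \ref{growing_ball}, the nested intersection of compact connected sets $\overline{U_{jk}}$ for \ref{tphi_connected}, and $\tldh^{-1}(\tx)=\tphi(\ty)$ via surjectivity of $\tldh$ for \ref{tldh_connected}. One useful point you make explicit, which the paper only uses implicitly when it writes $\tf^{-n}$ of a ball and treats it as connected with compact closure, is that the lift $\tf$ is a proper local diffeomorphism of the plane and hence a homeomorphism; your identification of $\bigcap_j\overline{U_{jk}}$ with $\tphi(\tx)$ is a touch terse but is exactly the expansion argument of \ref{characterization} applied to closed balls, so it closes. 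Where you genuinely diverge is \ref{preimage_of_connected}: the paper pushes a putative splitting $\tldh^{-1}(\mathcal{C})=A\cup B$ forward, writes $\mathcal{C}=\tldh(A)\cup\tldh(B)$, and uses connectedness of $\mathcal{C}$ to find a point whose fiber meets both pieces, contradicting \ref{tldh_connected}; you instead keep the separation upstairs and use properness of $\tldh$ (upper semicontinuity of the compact fibers) to show the induced partition of $\mathcal{C}$ is relatively open, contradicting connectedness. Both hinge on \ref{tldh_connected} and on $\|\tldh-\id\|\le\kappa$; the paper's version is shorter, while yours is the standard ``proper map with connected fibers is monotone'' argument and has the mild advantage of not needing the images $\tldh(A),\tldh(B)$ at all, so it works verbatim for closed, not necessarily compact, connected sets in $\RR$. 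Your reading of the statement via a connected lift of $\mathcal{C}$ is also the right one: the paper's own proof tacitly treats $\mathcal{C}$ as a subset of $\RR$, so this is an ambiguity of the statement, not a gap in your proof.
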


\begin{proof} The inclusion ``$\supset$'' in \ref{characterization} holds for
	every $r>0$. This follows by noting that iterates of any two points in
	the set on the right remain a bounded distance from one another. Since
	the linear part is expanding, this can only happen if they have the
	same image under $\tldh$.

	The inclusion ``$\subset$'' in \ref{characterization} holds for any
	$r>2\kappa$ where $\kappa>0$ is chosen such a way that
	$\|\tilde{h}-id\| \leq \kappa$. To see this, let $\ty \in \tphi(\tx)$.
	Then $\tldh(\ty) = \tldh(\tx)$ and, for $n\geq 0$,
	\begin{align*}
		\tldh(\tf^n(\tilde{y}))  =A^n(\tldh(\tilde{y}))
		=A^n(\tldh(\tilde{x})) =\tldh(\tf^n(\tilde{x})).
	\end{align*}

	Hence
	\begin{align*} \|\tf^n(\tilde{y})-\tf^n(\tilde{x})\| \leq
		\|\tf^n(\tilde{y})-\tldh(\tf^n(\tilde{y}))\| +
		\|\tldh(\tf^n(\tilde{x}))-\tf^n(\tilde{x})\| < r,
	\end{align*} and  we conclude that $\tilde{y} \in \bigcap_{n\geq 0}
		\tf^{-n}(B(\tf^n(\tilde{x}),r))$.

		Item \ref{growing_ball} holds because of
		\eqref{bounded_distance} and the fact that $A$ is expanding.

	To show \ref{tphi_connected}, fix $k$ and $r$ such that
	\ref{growing_ball} holds. If necessary, increase $r$ so that
	\ref{characterization} holds as well. Consider the sets $D_n(r) =
		\tf^{-n}(B(\tf^{n}(\tilde{x}),r))$. From \ref{characterization} we have
	that $\tphi(\tx) = \bigcap_{k \geq 0} D_{kn}$. Now,
	\[\tf^{k(n+1)}(\overline{D_{k(n+1)}}) = \overline{B(\tf^{k(n+1)}(\tx),
			r)} \subset \tf^k (B(\tf^{nk}(\tx), r)) = \tf^{k(n+1)}(D_{nk}),\] so
	that $\overline{D_{k(n+1)}}  \subset D_{nk}$. Hence $\tphi(\tx)$ can be
	written as $\bigcap_{n \geq 0} \overline{D_{nk}}$. In other words,
	$\tphi(\tx)$ is the intersection of
	a decreasing sequence of comact connected sets, so it is itself
	connected.

	Item \ref{tldh_connected} is an immediate consequence of \ref{tphi_connected}.

	We prove \ref{preimage_of_connected} by contradiction. First note that
	$\tldh^{-1}(\mathcal{C})$ is necessarily compact, since $\tldh$ is a
	bounded distance from the identity. Suppose that
	$\tldh^{-1}(\mathcal{C})$ is not  connected. Then there are disjoint
	compact sets $A$ and $B$ such that $\tldh^{-1}(\mathcal{C})=A\cup B$.
	Hence $\mathcal{C}=\tldh(A)\cup \tldh(B)$ with both $\tldh(A)$ and
	$\tldh(B)$ compact. Now, since $\mathcal{C}$ is connected, there exists
	some point $p \in \tldh(A) \cap \tldh(B)$. But then $\tldh^{-1}(p)$ can
	be written as the disjoint union $(\tldh^{-1}(p) \cap A) \cup
		(\tldh^{-1}(p) \cap B)$, both of which are closed. That is absurd.
\end{proof}

\begin{cor}\label{cor-connected} 
	Let $f: \TT \to \TT$ be an endomorphism with expanding linear part $A$. Then the following hold: 
	\begin{enumerate}[label=$\mathrm{(\alph*)}$]
		\item For each $p \in \mathbb{T}^2$, the set $h^{-1}(p)$ is a
		      connected set.
		\item For each closed connected set $\mathcal{C}$ in
		      $\mathbb{T}^2$, the set $h^{-1}(\mathcal{C})$ is
		      connected.
		\item For each $p \in \mathbb{T}^2$, $f(\phi(p))=\phi(f(p))$.
	\end{enumerate}
\end{cor}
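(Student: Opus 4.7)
The plan is to derive all three items from Proposition~\ref{prop-Phi} together with the commutation $\pi \circ \tldh = h \circ \pi$, where $\pi: \RR \to \TT$ is the covering projection. For item (a), I would observe that $h^{-1}(p) = \pi(\tldh^{-1}(\tx))$ for any lift $\tx$ of $p$, and invoke connectedness of $\tldh^{-1}(\tx)$ from Proposition~\ref{prop-Phi}\ref{tldh_connected}; the continuous image of a connected set is connected.

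For item (b), I plan to argue by the standard monotone-map principle: a continuous closed surjection with connected fibres pulls back connected sets to connected sets. The map $h: \TT \to \TT$ is continuous and closed (by compactness of $\TT$), surjective (being homotopic to the identity, hence of degree $\pm 1$), and has connected fibres by item (a) just proved. The argument runs: if $h^{-1}(\mathcal{C}) = U \sqcup V$ with $U, V$ disjoint nonempty closed sets, then $h(U)$ and $h(V)$ are closed, their union is $\mathcal{C}$, and they must share a point $q$ by connectedness of $\mathcal{C}$; but then the fibre $h^{-1}(q)$ decomposes into two disjoint nonempty relatively closed pieces, contradicting item (a).

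For item (c), I would work upstairs and show $\tf(\tphi(\tx)) = \tphi(\tf(\tx))$, which projects to the desired equality. The inclusion $\subset$ is immediate from $\tldh \circ \tf = A \circ \tldh$: if $\tldh(\ty) = \tldh(\tx)$, then $\tldh(\tf(\ty)) = A\tldh(\ty) = A\tldh(\tx) = \tldh(\tf(\tx))$. For $\supset$, I would pick $\tq' \in \tphi(\tf(\tx))$ and lift along $\tf$ to some $\tq \in \tf^{-1}(\tq')$, using surjectivity of $\tf$ on $\RR$ (which follows from $\tf = A + (\tf - A)$ with $A$ an invertible linear isomorphism and $\tf - A$ bounded). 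The equality $A\tldh(\tq) = A\tldh(\tx)$ then forces $\tldh(\tq) = \tldh(\tx)$, because $A$ is invertible as a linear map on $\RR$, so $\tq \in \tphi(\tx)$ and $\tf(\tq) = \tq'$.

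I do not foresee any serious obstacle; the only points requiring care are verifying the hypotheses of the monotone-map principle in (b) and distinguishing between $A$ acting invertibly on $\RR$ versus as a finite-degree endomorphism of $\TT$ in (c).
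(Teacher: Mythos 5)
Your proposal is correct, and since the paper states Corollary~\ref{cor-connected} without proof as an immediate consequence of Proposition~\ref{prop-Phi}, your route --- projecting items (c)--(e) of that proposition to $\TT$ via the equivariance $\tldh(\tx+v)=\tldh(\tx)+v$, $v\in\ZZ$, and rerunning the fibre-connectedness argument of item (e) for $h$ itself in part (b) --- is exactly the intended one. The steps you leave implicit (the identity $h^{-1}(p)=\pi(\tldh^{-1}(\tx))$ for a lift $\tx$ of $p$, and surjectivity of $\tf$ on $\RR$, which indeed holds since $\tf$ is a bounded perturbation of the invertible map $A$) are standard covering-space facts and are used correctly.
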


\subsection{Dynamical coherence}

A partially hyperbolic endomorphism on $\TT$ is said to be \emph{dynamically
coherent} if there exists an invariant $C^0$ foliation with $C^1$ leaves tangent to $E^c$. When it
exists, such a foliation is called a \emph{center foliation} of $f$ and its
leaves are called \emph{center leaves}. If $f$ and $g$ are two dynamically
coherent partially hyperbolic endomorphisms, we say that $f$ and $g$ are
\emph{leaf conjugate} if there exists a homeomorphism $\psi: \TT \to \TT$
mapping center leaves of $f$ to center leaves of $g$. A \emph{periodic center
annulus} is an annulus $\A \subset \TT$ such that $f^n(\A) = \A$
for some $n \geq 1$ whose boundary consists of either one or two $C^1$ circles 
tangent to the center direction.

\begin{thm}[Hall and Hammerlindl \cite{HH-classification}] \label{HH-class}
	Let $f:\TT \to \TT$ be a partially hyperbolic endomorphism which does not admit a periodic center annulus. Then $f$ is dynamically coherent and leaf conjugate to $A$.
\end{thm}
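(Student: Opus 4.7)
The plan is to construct an $f$-invariant center foliation and then promote the Franks semiconjugacy $h$ to a leaf conjugacy. I would work in the universal cover $\RR$ with lifts $\tf$ of $f$ and $\tldh$ of $h$, and denote by $E_A^c$ the eigenspace of $A$ corresponding to $\lambda_2$ (the ``linear center direction'').

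First I would construct a branching center foliation $\mathcal{F}^c_{br}$ following the Burago--Ivanov technique: approximate the continuous line field $E^c$ uniformly by $C^1$ line fields, integrate each one to a $C^1$ foliation, and take a Hausdorff limit on compact subsets of $\RR$. This produces a family of $C^1$ curves tangent to $E^c$ that cover $\TT$, is permuted by $\tf$, and whose members cannot cross transversally---distinct leaves may agree on arcs and then separate, but never cross. Using the uniform transversality of $E^c$ to $\cC^u$ together with the bounded-distance estimate \eqref{bounded_distance}, one checks that in $\RR$ each such leaf is properly embedded and quasi-isometric to a line parallel to $E_A^c$.

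The crucial step is to rule out the branching under the no-periodic-annulus hypothesis. If two distinct branching leaves share a common arc but split afterwards, they bound an open ``bigon'' region in $\TT$ whose boundary consists of $C^1$ arcs tangent to $E^c$. The collection of such bigons is $f$-invariant, and since their boundary leaves are quasi-isometric to translates of $E_A^c$, each such region projects to a genuine topological annulus. Because $f$ expands area but preserves the combinatorial type of the bigon family, some iterate of $f$ must send a bigon region onto itself, producing a periodic center annulus and contradicting the hypothesis. This promotes $\mathcal{F}^c_{br}$ to an honest invariant center foliation $\mathcal{F}^c$ and establishes dynamical coherence.

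For the leaf conjugacy I would show that $\tldh$ sends each lifted center leaf $\tilde{L}$ onto a single line parallel to $E_A^c$: arcs of $\tilde{L}$ iterated by $\tf^n$ keep tangent directions outside $\cC^u$ and therefore stay uniformly thin in the $E_A^u$-direction; combined with $\|\tf^n(\tx)-A^n(\tldh(\tx))\|<\kappa$, the image $\tldh(\tilde{L})$ lies in a bounded tube around some line parallel to $E_A^c$, and by Corollary~\ref{cor-connected} it is connected, so it equals that line. The leaf conjugacy $\psi$ is then defined leafwise, by sending each center leaf of $f$ onto the corresponding linear center leaf in an orientation- and $h$-compatible way; continuity across leaves follows from continuity of the foliations and of $h$. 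The main obstacle is the no-branching argument: the a priori branching locus is a potentially complicated $f$-invariant set, and showing that any nontrivial branching must close up to a genuine periodic center annulus---rather than to a weaker sort of recurrent invariant region---is the delicate topological and dynamical point of the proof.
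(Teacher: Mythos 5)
First, note that the paper does not prove Theorem~\ref{HH-class} at all: it is imported verbatim from Hall and Hammerlindl \cite{HH-classification} and used as a black box, so there is no internal proof to compare yours against. Judged on its own merits, your sketch reproduces the broad architecture one would expect (a Burago--Ivanov-style branching family of center curves, exclusion of branching via the no-periodic-annulus hypothesis, and promotion of the Franks map $h$ to a leaf conjugacy), but the two steps that carry all the weight are exactly the ones you leave unsupported. The crux is your bigon argument. From two branching leaves that share an arc and then separate you get a region in $\RR$ bounded by center arcs; for its projection to be a topological annulus you need that region to be invariant under a deck translation, which does not follow from the local picture of leaves splitting apart --- a bigon is typically a bounded or semi-infinite strip whose projection is not essential. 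Worse, the claim that ``$f$ expands area but preserves the combinatorial type of the bigon family, so some iterate sends a bigon onto itself'' is not a valid mechanism: an $f$-invariant family of open regions can be permuted with every member wandering (the paper's own Theorem~\ref{thm-B} shows that wandering annuli genuinely occur in this category), and area expansion by itself rules out nothing here, since the regions are noncompact upstairs and nothing bounds their number or volume downstairs. Converting ``nontrivial branching'' into ``periodic center annulus'' is precisely the hard content of \cite{HH-classification}, and your sketch does not supply an argument for it.

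The leaf-conjugacy step also has a gap. Showing that $\tldh$ maps each lifted center leaf into a single line parallel to $E_A^c$ is fine (your thin-tube argument, using \eqref{bounded_distance}, is essentially the paper's Lemma~\ref{lema:leaf}), but defining $\psi$ ``leafwise, compatibly with $h$'' presupposes that $h$ induces a homeomorphism of the center leaf space of $f$ onto that of $A$; a priori $h$ may send distinct center leaves to the same line. In this paper that injectivity (Lemma~\ref{lemma-phi}) is obtained only in the specially partially hyperbolic setting, using quasi-isometry of the unstable foliation and global product structure, neither of which you have established in the generality of Theorem~\ref{HH-class}. So the leaf conjugacy cannot simply be read off from $h$; an independent construction (or an argument that the induced map on leaf spaces is a circle map conjugate to the linear one) is needed. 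As it stands, the proposal is a plausible outline with the two decisive steps missing.
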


\begin{rmk}
	In general, a partially hyperbolic endomorphism is not necessarily
	dynamically coherent, even when having expanding linear part. An
	example was given in \cite{HH-Incoherent} with linear part is as in 
	$\eqref{lower-triangular}$.
\end{rmk}

\subsection{Changing coordinates} 
This work concerns specifically endomorphisms whose linear part $A$ has
\emph{integer} eigenvalues.
It is convenient to suppose that one of the eigenspaces is the vertical
direction, i.e. that $A$ is represented by a lower triangular matrix of the form
\begin{equation} \label{lower-triangular}
	A = \left( \begin{matrix}
			\lambda_1 & 0         \\
			\mu       & \lambda_2
		\end{matrix} \right)
\end{equation}
where $|\lambda_1| \geq | \lambda_2| > 1$ are the (integer) eigenvalues of $A$
and $\mu$ is some integer.
There is no loss of generality in doing that.

\begin{lemma}\label{canonical_form} Let $A$ be a $2$ by $2$ matrix with integer entries and two integer	eigenvalues $\lambda_1, \lambda_2$. Then there exists $P \in	\SL2Z$ such that $P^{-1}A P $ is of the form \eqref{lower-triangular} for some $\mu \in \Z$.
\end{lemma}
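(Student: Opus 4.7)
The plan is to exploit the fact that an integer eigenvalue of an integer matrix admits a \emph{primitive} integer eigenvector, and that any primitive vector in $\Z^2$ extends to an $\SL2Z$-basis. I would organize the argument around the eigenvalue $\lambda_2$, producing the second column of the conjugating matrix first.

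First, I would dispose of the trivial case $A = \lambda I$ (which necessarily has $\lambda_1 = \lambda_2 = \lambda$), where $P = I$ works with $\mu = 0$. Otherwise $A - \lambda_2 I$ is a nonzero integer matrix with $\det(A - \lambda_2 I) = 0$, hence of rank exactly one. Its kernel is a one-dimensional $\Q$-subspace of $\Q^2$, and clearing denominators followed by dividing out $\gcd$'s produces a primitive integer eigenvector $v = (c,d)^T$, i.e. with $\gcd(c,d) = 1$.

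Second, by Bézout's identity there exist $a, b \in \Z$ with $ad - bc = 1$. I would then set
\[
P = \begin{pmatrix} a & c \\ b & d \end{pmatrix} \in \SL2Z,
\]
so that $P e_2 = v$. Consequently
\[
P^{-1} A P \, e_2 \;=\; P^{-1} (A v) \;=\; P^{-1}(\lambda_2 v) \;=\; \lambda_2 e_2,
\]
which says precisely that the second column of $P^{-1} A P$ is $(0, \lambda_2)^T$; in other words, $P^{-1}AP$ is lower triangular with $\lambda_2$ in the $(2,2)$-entry. Since conjugation preserves trace and determinant, the $(1,1)$-entry must be $\lambda_1$, and the $(2,1)$-entry $\mu$ is automatically an integer because $A$, $P$, and $P^{-1}$ all have integer entries (using $\det P = 1$). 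This is exactly the form \eqref{lower-triangular}.

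The only mildly non-routine step is the production of a primitive integer eigenvector for an integer eigenvalue, but this is immediate from the rank-one structure of $A - \lambda_2 I$; everything else is bookkeeping with Bézout and a change of basis. I do not anticipate a genuine obstacle in this lemma.
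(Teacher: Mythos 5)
Your proof is correct and follows essentially the same route as the paper: take a primitive integer eigenvector for $\lambda_2$, complete it via B\'ezout to the second column of a matrix $P \in \SL2Z$, and observe that $P^{-1}AP$ is then lower triangular with $\lambda_2$ in the $(2,2)$-entry. The only difference is cosmetic --- you justify the existence of the primitive eigenvector via the rank-one structure of $A-\lambda_2 I$ (plus the scalar case), which the paper simply asserts.
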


\begin{proof}
	Since $A$ has integer eigenvalues, there exists $v \in \ZZ$ such that
	$Av = \lambda_2 v$. Without loss of generality, we may suppose that the
	components $v_1, v_2$ of $v$ are coprime. Let $p, q$ be such that $p
		v_1 + q v_2 = 1$ and take
	\begin{equation*}
		P = \left( \begin{matrix}
			q  & v_1 \\
			-p & v_2
		\end{matrix} \right).
	\end{equation*}
	Then $P^{-1}AP$ is of the form \eqref{lower-triangular}.
\end{proof}



\section{Proof of Theorem~\ref{thm-A}} \label{section3}

Before turning to the specific setting of Theorem~\ref{thm-A}, let us take a
look at how the strongly volume expanding property 
serves as a mechanism to produce homology in two linearly independent
directions for large iterates of an open set.

Recall that an open set $U \subset \TT$ is called \emph{essential} if it
contains a loop $\gamma$ such that its homotopy class $[\gamma]$ is non-zero in
$\pi_1(\TT) \cong \ZZ$. Similarily, we define $U$ to be \emph{doubly essential}
if it contains loops $\gamma$ and $\sigma$ such that $[\gamma]$ and $[\sigma]$
are linearly independent. 

It is straightforward to see that if $f$ is volume expanding, then a
sufficiently large iterate of any open set is essential. The main idea behind
Theorem~\ref{thm-A} is that strong volume expansion leads to high iterates of
any open set being doubly essential.

\begin{lemma} \label{doubly-essential}
	Let $f$ be a strongly volume expanding endomorphism on $\TT$.
 	Then, given any open set $U \subset \TT$, there exists $N \geq 0$ such that
	$f^n(U)$ is doubly essential for every $n \geq N$. 
\end{lemma}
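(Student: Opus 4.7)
My plan is to work in the universal cover and use Blichfeldt's theorem as a pigeonhole tool on the multiplicity of $\tilde{f}^n$. Take a small connected open $U_0 \subseteq U$ whose bijective lift $\tilde{U}_0 \subset \RR$ sits inside a fundamental domain of $\pi : \RR \to \TT$, and set $S_n := \tilde{f}^n(\tilde{U}_0)$. Since $\tilde{f}$ is a local diffeomorphism, $S_n$ is an open connected subset of $\RR$. The key reformulation is that $f^n(U_0)$ is doubly essential if and only if $W_n := (S_n - S_n) \cap \ZZ$ contains two $\Q$-linearly independent vectors: any path in $S_n$ joining $\tilde{p}$ to $\tilde{p}+v$ with $v \in \ZZ$ projects to a loop in $f^n(U_0)$ of homology class $v$.

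The first step is to produce many elements of $W_n$. Consider the multiplicity function $\phi_n(\tilde{y}) = \#\{\tilde{x} \in \tilde{U}_0 : \tilde{f}^n(\tilde{x}) = \tilde{y}\}$. The area formula combined with strong volume expansion gives
\[
\int_\RR \phi_n \;=\; \int_{\tilde{U}_0} |\det Df^n| \;\geq\; C\lambda^n \vol(\tilde{U}_0),
\]
which tends to infinity with exponent strictly above $\lambda_1$. Periodizing $\phi_n$ over $\ZZ$ gives the preimage counting function $\Phi_n(z) = \#(f^{-n}(z) \cap U_0)$ on $\TT$, with the same total mass, so Blichfeldt-type pigeonhole produces some $z_n \in f^n(U_0)$ with exponentially many preimages in $U_0$. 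Lifting each such preimage into $\tilde{U}_0$ and applying $\tilde{f}^n$ deposits exponentially many distinct lattice points over $z_n$ inside $S_n$, whose pairwise differences lie in $W_n$. This already reproduces the easier ``essentiality from volume expansion'' statement mentioned before the lemma.

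The delicate step is to upgrade essentiality to double essentiality, i.e., to show that $W_n$ cannot remain inside a rank-one sublattice for large $n$. Arguing by contradiction, assume $W_n \subseteq \ZZ u_n$ for some primitive $u_n \in \ZZ$ and infinitely many $n$; since primitive integer vectors are countable, a subsequence has $u_n = u$ constant. The identity $\tilde{f}(\tilde{x}+v) = \tilde{f}(\tilde{x}) + Av$ transports the stabilizer equivariantly: if $v$ stabilizes $S_n$ modulo $\ZZ$-translation then $Av$ stabilizes $S_{n+1}$. Along the constant-direction subsequence this forces $Au \in \ZZ u$, so $u$ must be an integer eigenvector of $A$ for some eigenvalue of modulus greater than one. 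In the lower-triangular coordinates of Lemma~\ref{canonical_form}, $u$ is then one of the (finitely many) primitive integer $A$-eigenvectors. Geometrically, $S_n$ is trapped in a strip parallel to $u$ of transverse width less than $1/\|u\|$ for every $n$ in the subsequence.

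The final contradiction would come from the semiconjugacy estimate \eqref{bounded_distance}: $S_n$ sits within Hausdorff distance $\kappa$ of $A^n \tilh(\tilde{U}_0)$, and the direction transverse to $u$ is the other eigendirection of $A$, which is also expanded by $A^n$. Hence the transverse extent of $A^n \tilh(\tilde{U}_0)$ equals an exponentially growing multiple of the transverse extent of $\tilh(\tilde{U}_0)$, and as soon as the latter is strictly positive it will eventually exceed $1/\|u\|$ and violate the strip condition. I expect the main technical obstacle to be the residual degenerate case where $\tilh(\tilde{U}_0)$ is itself contained in a line parallel to $u$ — that is, $h$ collapses $U_0$ onto a single circle of the $u$-parallel foliation of $\TT$. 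Excluding this is precisely where the strict inequality $\lambda > \lambda_1$ in the definition of strongly volume expanding must be used, through a finer Blichfeldt-type argument balancing the exponential lower bound $\int \phi_n \geq C\lambda^n$ against the geometric upper bound $\phi_n \leq |\det A|^n$ coming from the degree of $f^n$ together with the thin-strip geometry.
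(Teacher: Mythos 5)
Your first stage (many lattice translates inside $S_n=\tf^n(\tilde{U}_0)$ from the volume growth, via an averaging/Blichfeldt argument) is fine and matches the spirit of the paper, but the second stage --- upgrading essential to doubly essential --- is not a proof, and that is where the whole content of the lemma lies. Concretely: (i) ``primitive integer vectors are countable, hence a subsequence has $u_n=u$ constant'' is a non sequitur; countability gives no constant subsequence (think $u_n=(1,n)$), and you have no a priori bound on $u_n$. (ii) The claim that $W_n\subseteq\Z u$ traps $S_n$ in a strip of transverse width less than $1/\|u\|$ is unjustified and false for general open connected sets: a thin strip of irrational slope, or a thin spiral-like set in the cylinder $\RR/\Z u$, can have arbitrarily large transverse extent while $(S-S)\cap\ZZ$ stays inside $\Z u$ (or is even trivial). (iii) Most importantly, the degenerate case in which $\tilh(\tilde{U}_0)$ lies in a single line parallel to $u$ is exactly the case the hypothesis $\lambda>\lambda_1$ must exclude --- it is what actually happens in situations like Example~\ref{invariant_stripe_example}, which is volume expanding but not strongly so --- and you leave it open, offering only a gesture at ``a finer Blichfeldt-type argument'' (note also that on the cover $\tf^n$ is injective, so the bound $\phi_n\le|\det A|^n$ you invoke concerns the torus counting function $\Phi_n$, not $\phi_n$). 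A smaller issue: from $A^m u\in\Z u$ you only get that $u$ is an eigenvector of a power of $A$, and the lemma as stated does not even assume integer eigenvalues.

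The idea you are missing is that no eigendirection analysis or case distinction is needed: the paper proves Lemma~\ref{doubly-essential-lift} by a single quantitative comparison between a diameter upper bound and a lattice-point count. By \eqref{bounded_distance} and Gelfand's formula, $\tf^n(B)$ has diameter at most $L_n\approx(\lambda_1+\epsilon)^n$, while strong volume expansion gives $\Leb(\tf^n(B))\geq C\lambda^n\Leb(B)$ with $\lambda>\lambda_1$; for large $n$ Blichfeldt's Theorem~\ref{blichfeldt} then places at least $\ell+1$ points of $\ZZ$ in a translate of $\tf^n(B)$, where $\ell=\lfloor L_n\rfloor$, and since these points fit in an $\ell\times\ell$ integer box, the pigeonhole principle forces two of them in a common row and two in a common column, i.e.\ differences along $e_1$ and along $e_2$ simultaneously. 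The key point is that only an \emph{upper} bound on the diameter of $\tf^n(B)$ is used, which holds whether or not $h$ collapses $B$, so your degenerate case simply never arises. (Your degenerate case can in fact be closed by comparing $\Leb(S_n)\gtrsim\lambda^n$ with the area, of order $\kappa(\lambda_1+\epsilon)^n$, of the $\kappa$-neighbourhood of the segment $A^n\tilh(\tilde{U}_0)$ --- but once one sees this count-versus-size comparison, it proves the whole lemma directly, which is precisely the paper's route; as written, your argument stops short of a proof.)
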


The proof of Lemma \ref{doubly-essential} is a direct consequence of:

\begin{lemma} \label{doubly-essential-lift}
	Let $f$ be a strongly volume expanding endomorphism on $\TT$ and $\tf:
	\RR \to \RR$ a lift of $f$. Then, given any open set $\tU \subset \RR$,
	there exists $N \geq 0 $ such that for every $n \geq N$, there exist
	points $\tp_1, \tq_1, \tp_2, \tq_2$ in $\tf^n(\tU)$
	such that $\tp_1-\tq_1$ is a
	non-zero multiple of $e_1$ and $\tp_2-\tq_2$ is a non-zero multiple of
	$e_2$.
\end{lemma}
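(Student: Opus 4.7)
The approach is a Blichfeldt-style pigeonhole argument: combine a lower bound on the Lebesgue measure of $\tf^n(\tU)$ coming from the strongly volume expanding hypothesis with an upper bound on its extent in the two coordinate directions coming from the semiconjugacy. Replacing $\tU$ by a small open ball inside it loses no generality, so assume from the outset that $\tU$ is bounded.

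The key preliminary step is to observe that $\tf : \RR \to \RR$ is a global diffeomorphism. It is a local diffeomorphism between simply connected spaces, and it is proper: combining \eqref{bounded_distance} with the invertibility of $A$ and $\|\tldh - \id\| \leq \kappa$ gives $\|\tf(\tx)\| \to \infty$ as $\|\tx\| \to \infty$. A proper local diffeomorphism is a covering map, and any covering of the simply connected space $\RR$ is trivial; hence $\tf$, and therefore $\tf^n$, is a diffeomorphism. The change of variables formula together with the strongly volume expanding hypothesis then yields
\[
    \Leb(\tf^n(\tU)) \;=\; \int_{\tU} |\det D\tf^n|\, d\Leb \;\geq\; C \lambda^n \Leb(\tU)
\]
for some $\lambda > |\lambda_1|$ and $C > 0$.

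To control the extent of $\tf^n(\tU)$, I would use that $A$ is lower triangular. An easy induction gives
\[
    A^n = \begin{pmatrix} \lambda_1^n & 0 \\ \mu_n & \lambda_2^n \end{pmatrix}
\]
with $|\mu_n| \leq C' |\lambda_1|^n$. Combined with \eqref{bounded_distance} and the boundedness of $\tldh(\tU)$, this produces a constant $C_1$ such that $\tf^n(\tU) \subset [a_n, b_n] \times [c_n, d_n]$ with $b_n - a_n \leq C_1 |\lambda_1|^n$ and $d_n - c_n \leq C_1 |\lambda_1|^n$.

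To conclude, consider the quotient $\pi_2 : \RR \to \RR/(\Z e_2) \cong \R \times \T$, which is a local isometry whose fibres are the cosets $p + \Z e_2$. The image $\pi_2(\tf^n(\tU))$ lies in the strip $[a_n, b_n] \times \T$, so its cylinder area is at most $C_1 |\lambda_1|^n$. If $\pi_2$ were injective on $\tf^n(\tU)$, the change of variables applied to $\pi_2$ would force $\Leb(\tf^n(\tU)) \leq C_1 |\lambda_1|^n$, contradicting the lower bound $C \lambda^n \Leb(\tU)$ once $n$ is large enough (since $\lambda > |\lambda_1|$). Therefore distinct $\tp_2, \tq_2 \in \tf^n(\tU)$ with $\tp_2 - \tq_2 \in \Z e_2 \setminus \{0\}$ must exist; the symmetric argument applied to $\RR/(\Z e_1)$ produces the pair $\tp_1, \tq_1$, and taking the maximum of the two thresholds gives the uniform $N$. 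The delicate point is the global invertibility of $\tf$; without it the change of variables formula would only control an integrated multiplicity and not $\Leb(\tf^n(\tU))$ itself, and the pigeonhole step would break down.
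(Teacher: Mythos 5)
Your proof is correct, but it takes a genuinely different route from the paper's. The paper restricts to a small ball $B\subset\tU$, bounds the diameter of $\tf^n(B)$ by $L_n\approx(\lambda_1+\epsilon)^n$ using Gelfand's formula together with \eqref{bounded_distance}, and then invokes Blichfeldt's theorem: since $\Leb(\tf^n(B))>\ell=\lfloor L_n\rfloor$, some integer translate of $\tf^n(B)$ contains at least $\ell+1$ lattice points inside an $\ell\times\ell$ grid, and a row/column pigeonhole produces the two pairs simultaneously. You instead handle each direction separately, deducing non-injectivity of the projection $\RR\to\RR/\Z e_i$ on $\tf^n(\tU)$ by comparing the volume lower bound $C\lambda^n\Leb(\tU)$ with the area of the bounding strip, of order $|\lambda_1|^n$; this is a more elementary, per-direction version of the same measure-versus-extent pigeonhole (in effect Blichfeldt for the rank-one lattice $\Z e_i$), and it dispenses with the grid bookkeeping. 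A genuine merit of your write-up is that you justify the lower bound $\Leb(\tf^n(\tU))\ge C\lambda^n\Leb(\tU)$ by proving that the lift $\tf$ is a global diffeomorphism of $\RR$ (a proper local diffeomorphism of a simply connected space is a covering, hence invertible); the paper uses this bound implicitly, and, as you observe, without injectivity of the lift one would only control the Jacobian integral counted with multiplicity, not the measure of the image. Two small remarks: your intermediate estimate $|\mu_n|\le C'|\lambda_1|^n$ fails by a factor of $n$ when $\lambda_1=\lambda_2$ and $\mu\neq 0$ (one gets $|\mu_n|\le C'\,n\,|\lambda_1|^{n}$, or one can simply quote Gelfand's formula as the paper does), which is harmless here since $\lambda>|\lambda_1|$; and the lower-triangular normal form is not really needed, since all you use is $\|A^n\|\le C(\lambda_1+\epsilon)^n$, valid for any linear part.
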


The proof of Lemma \ref{doubly-essential-lift} is based on a classical theorem about the geometry of numbers.

\begin{thm}[Blichfeldt's Theorem \cite{MR1500976}] \label{blichfeldt} 
	Let $B \subseteq \mathbb{R}^2$ be a Lebesgue measurable set such that
	$Leb(B) > k$ for some positive integer $k$. Then there exist $x_0,
	\ldots, x_k$ in $B$ such that $x_i-x_0 \in \mathbb{Z}^n$ for every $i=1,
	\ldots, k$.
\end{thm}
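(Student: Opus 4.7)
The plan is to deduce the theorem from a measure-theoretic pigeonhole argument applied to integer translates of a unit square. Heuristically, $B$ has too much area to project to the quotient $\RR/\ZZ$ with multiplicity at most $k$, so some point in the quotient must be covered by at least $k+1$ ``sheets'' of $B$, and these sheets give $k+1$ points of $B$ that pairwise differ by integer vectors.

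First I would partition $\RR$ into the half-open fundamental domains $Q_v = [0,1)^2 + v$ indexed by $v \in \ZZ$, set $B_v = B \cap Q_v$, and translate each piece back to the fundamental domain by defining $A_v = B_v - v \subseteq [0,1)^2$. Each $A_v$ is Lebesgue measurable with $\Leb(A_v) = \Leb(B_v)$, and by $\sigma$-additivity,
\[
\sum_{v \in \ZZ} \Leb(A_v) \;=\; \Leb(B) \;>\; k.
\]

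The core of the argument is then the integral identity
\[
\int_{[0,1)^2} \sum_{v \in \ZZ} \mathbf{1}_{A_v}(y)\,dy \;=\; \sum_{v \in \ZZ} \Leb(A_v) \;>\; k,
\]
justified by monotone convergence. Since $[0,1)^2$ has unit measure, the integrand must exceed $k$ on a set of positive measure; being integer-valued, it therefore attains a value at least $k+1$ at some point $y$. This yields pairwise distinct lattice vectors $v_0, v_1, \ldots, v_k \in \ZZ$ with $y \in \bigcap_{i=0}^k A_{v_i}$, and then $x_i := y + v_i \in B_{v_i} \subseteq B$ furnishes $k+1$ points of $B$ satisfying $x_i - x_0 = v_i - v_0 \in \ZZ$ for every $i = 1, \ldots, k$ (with all these differences nonzero, since the $v_i$ are distinct). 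The only subtle point is legitimizing the integer-valued pigeonhole step with countably many translates, which is dispatched by monotone convergence; the rest is measure-theoretic bookkeeping and presents no real obstacle.
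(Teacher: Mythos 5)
Your argument is correct and complete: it is the standard proof of Blichfeldt's theorem (partition $\mathbb{R}^2$ into the translates $[0,1)^2+v$, fold the pieces back into the fundamental domain, apply Tonelli/monotone convergence to see that the multiplicity function integrates to more than $k$ over a domain of measure one, and conclude that some point is covered at least $k+1$ times). The paper does not prove this statement at all --- it is quoted as a classical result with a citation to the literature --- so your proposal supplies a proof where the authors rely on a reference; there is nothing to compare beyond noting that your write-up matches the textbook argument the citation points to. One cosmetic remark: the statement as printed says $x_i - x_0 \in \mathbb{Z}^n$, which is evidently a typo for $\mathbb{Z}^2$ (the ambient space is $\mathbb{R}^2$), and your proof correctly produces differences in $\mathbb{Z}^2$.
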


\begin{proof}[Proof of Lemma~\ref{doubly-essential-lift}]

Let $B$ be a (non-empty)  open connected
subset of $\tU$ contained in a ball of  radius less than one. By Gelfand's
formula,
\[
	\| A^n \|<(\lambda_1+\epsilon)^n
\]
for $n$ greater than some $n_0$. By \ref{bounded_distance} we have that $\tldh(B)$
is contained in a ball of radius $1+\kappa$ so that for $n> n_0$, 
$\tf^n(B)$ is contained in a ball of diameter less that $L_n =
2(1+\kappa)(\lambda_1+\epsilon)^n + 2 \kappa$. 
Choose $N>n_0$ so that $L_N  < \lambda^N \Leb(B)$. 

Now suppose that  $n \geq N$ and let $\ell$ be the integer part of $L_n$. 
Then
\[
	\Leb(\tf^n(B)) > \ell
\]
so by Blichfeldt's Theorem there is $\tx \in \RR$ such that $\tx+ \tf^n(B)$
intersects $\ZZ$ in at least $\ell+1$ points. 
Recall that  $L_n$ is an upper bound for the diameter of $\tf^n(B)$ so, upon possibly adding an element of
$\ZZ$ to $\tx$, we may assume that \[(\tx+ \tf^n(B)) \cap \ZZ \subset \{1,
\ldots, \ell\}^2.\]

	In other words, the intersection
	of $\tx+\tf^n(B)$ with $\ZZ$ consists of at least $\ell+1$ points
	and is contained in $\{1, \ldots, \ell \}^2$. By the pigeon hole
	principle there must be a line $\{1, \ldots, \ell\} \times \{i\}$
	containing two points $\tx_1, \ty_1$ of the intesection. Similarily,
	there is a column $\{j\} \times \{1, \ldots, \ell\}$ containing two
	points $\tx_2, \ty_2$ of the
	intersection. The proof follows by taking $\tp_i = \tx_i-\tx$ and $\tq_i =
	\ty_i-\tx $ for $i = 1, 2$.
\end{proof}

\begin{lemma} \label{leaf_conj_dyn_coh}
	  Let  $f: \TT \to \TT$ be a partially hyperbolic endomorphism. If $f$ is strongly volume expanding, then $f$ is  dynamically
	coherent and leaf conjugated to its linear part.
\end{lemma}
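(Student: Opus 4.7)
The plan is to combine Theorem~\ref{HH-class} of Hall and Hammerlindl with Lemma~\ref{doubly-essential}. By Theorem~\ref{HH-class}, a partially hyperbolic endomorphism on $\TT$ is dynamically coherent and leaf conjugate to its linear part as soon as it admits no periodic center annulus. So it suffices to rule out the existence of periodic center annuli under the strongly volume expanding assumption.

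I would argue by contradiction. Suppose $\A \subset \TT$ is a periodic center annulus, so that $\A$ is an open set homeomorphic to $(-1,1) \times S^1$ and $f^n(\A)=\A$ for some $n \geq 1$. Applying Lemma~\ref{doubly-essential} to the open set $\A$ yields an integer $N$ such that $f^m(\A)$ is doubly essential for every $m \geq N$. Choosing $k$ large enough that $kn \geq N$, we conclude that $\A = f^{kn}(\A)$ is itself doubly essential, i.e.\ contains two loops whose homotopy classes are linearly independent in $\pi_1(\TT) \cong \ZZ$.

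This contradicts the topology of an annulus. Indeed, the inclusion $\A \hookrightarrow \TT$ factors through $\pi_1(\A) \cong \Z$, so the image of $\pi_1(\A)$ in $\pi_1(\TT)$ is a cyclic subgroup of $\ZZ$; in particular it cannot contain two $\Z$-linearly independent elements. Hence no periodic center annulus can exist, and Theorem~\ref{HH-class} gives dynamical coherence together with a leaf conjugacy to $A$.

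The only delicate step is the topological observation that a subset of $\TT$ homeomorphic to $(-1,1)\times S^1$ cannot be doubly essential; the rest is a direct application of the two results already available. I do not anticipate this step to be hard, since it is an immediate consequence of the cyclicity of $\pi_1$ of an annulus and the naturality of $\pi_1$ with respect to inclusion.
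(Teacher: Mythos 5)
Your proof is correct and follows essentially the same route as the paper: reduce to the nonexistence of a periodic center annulus via Theorem~\ref{HH-class}, then use Lemma~\ref{doubly-essential} to make a high iterate of the annulus doubly essential, contradicting the fact that the image of the cyclic group $\pi_1(\A)$ in $\pi_1(\TT)\cong\ZZ$ cannot contain two linearly independent classes. The paper states this last topological point without elaboration; your justification of it is the right one.
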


\begin{proof}
	By Theorem \ref{HH-class}, it suffices to show that $f$ does not admit
	a periodic center annulus. 
	Lemma~\ref{doubly-essential} implies that any open set must
	become doubly essential after a sufficient number of iterations. But no iterate of a periodic center annulus is doubly essential.
\end{proof}

\begin{rmk}
It is proved in \cite{HH-classification} that the absence of a periodic center annulus implies that the eigenvalues $\lambda_1$ and $\lambda_2$ of $A$ are distinct real numbers. 
\end{rmk}

In the proof of Theorem~\ref{thm-A} it will be convenient to reduce the
argument to the case in which $f$ is a skew-product. This can always be done
--- at least at the cost of sacrificing differentiability. Indeed, by
Lemma~\ref{leaf_conj_dyn_coh}, $f$ is leaf conjugated to its linear part $A$.
Let us denote the leaf conjugacy by $\psi$. Then the map $g = \psi \circ
f\circ\psi^{-1}$ preserve the foliation of $\TT$ into vertical circles (the
center leaves of the map $A$), and is therefore a skew product.

\begin{rmk} \label{homotopic_to_id}
	Although it is not stated explicitly in \cite{HH-classification}, it
	can be read from the proofs that the leaf conjugacy $\psi: \TT \to \TT$
	is	homotopic to the identity and $g = \psi f \psi^{-1}$ is of the
	form	$g(x,y) = (\lambda_1 x, \tau_x(y))$, where $\tau_x : S^1 \to S^1$
	is a continuous family of differentiable maps of degree
	$\lambda_2$. Since $\psi$ and $h$ are homotopic to the identity, so is
	$h_g$.
\end{rmk}

\begin{proof}[Proof of Theorem~\ref{thm-A}] Let $U \subset \TT$ be a
	(non-empty) open set. We shall show that there is some $n$ such that
	$f^n(U)  = \TT$.  We denote $\pi^{-1}(U)$ by $\tU$. Since $\tpsi(\tU)$
	is open, it contains an open rectangle $R = (x_1,x_2)\times(y_1, y_2)$.
	By Lemma~\ref{doubly-essential-lift} there exists $k$ such that
	$\tf^k(\tpsi^{-1}(R))$ contains points that differ by a non-zero
	multiple of $e_2$. But then the same is true for $\tg^k(R)$ (see
	Remark~\ref{homotopic_to_id}). 
	We are assuming $A$ to be of the form
	\eqref{lower-triangular} so that $\tg^k(R)$ is a union of vertical
	lines. This means that $\tg^k(R)$ must contain a vertical line whose
	length is larger than one. Since $\tg^n(R)$ is open, $\pi(\tg^n(R))$
	contains a vertical strip, i.e. a set of the form $I \times S^1$ for
	some open interval $I = (a,b)$. Iterating this strip $\ell$ times by
	$g$, where $|\lambda_1 |^\ell (b-a)>1$, we get the whole torus $\TT$. 
	The proof follows by taking $n=k+ \ell$. 
\end{proof}

\begin{rmk}
	The proof of Theorem~\ref{thm-A} shows that given any open $U \subset \TT$ there exists $n$ such
	that $f^n(U) = \TT$. This property, sometimes so called \emph{topological exactness},	or \emph{locally eventually onto} is much stronger than transitivity. In fact, it is straightforward to see that it implies topological mixing.	Hence Theorem~\ref{thm-A} and Corollary~\ref{corollaryA} remain valid if we replace `transitive' with `mixing'.
\end{rmk}


\section{Proof of Theorem~\ref{thm-B}}\label{section2}
In what follows we shall fix a specially partially hyperbolic endomorphism \linebreak
$f:\mathbb{T}^2 \to \mathbb{T}^2$ and $\lambda_1, \lambda_2 \in \Z$ with $|\lambda_1|>|\lambda_2|>1$ as the eigenvalues of $A$.
Since the unstable direction (defined by \eqref{unstable_direction}) is independent of the past, 
$f$ has a non-trivial invariant splitting
\begin{equation}
	T_p\mathbb{T}^2=E^c\oplus E^u
\end{equation}
such that for all $p \in \TT$ and all unit vectors $v \in E^c_p$ and $w \in E^u_p$,
\begin{equation*} 
\| Df(v)\|<\|Df(w)\| \ \ \text{and} \ \ \|Df(w)\|>1.
\end{equation*}

Such an endomorphism always has a foliation tangent to the 
unstable bundle $E^u$. Indeed this follows by applying the classical arguments
of Hirsh, Pugh and Shub to the lift and then projecting to the torus (or
whatever be the manifold under consideration). Let us denote by $\mathcal{F}^u$
the foliation tangent to $E^u$ and call it the \textit{unstable foliation}. 

Although every specially hyperbolic endomorphism has an unstable foliation, it does not necessarily have a central one.
Indeed, in \cite{He-Shi-Wang} there is an example of a \emph{dynamically
incoherent} specially partially hyperbolic endomorphism (whose linear part is
not expanding). However, when the linear part is expanding, the next  result follows as direct consequence of \cite{HH-classification}[Theorem~E].

\begin{prop}
A specially partially hyperbolic endomorphism with expanding linear part does not admit a periodic center annulus. 
\end{prop}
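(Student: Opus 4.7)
The plan is to argue by contradiction: suppose a periodic center annulus exists and apply Hall and Hammerlindl's \cite[Theorem~E]{HH-classification} to obtain an incompatibility with the hypothesis of being specially partially hyperbolic. The classification of \cite{HH-classification} is organized precisely around periodic center annuli as the obstructions to dynamical coherence and leaf conjugacy, and the role of the expanding linear part is to place us squarely in the regime where their results apply cleanly.

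Concretely, suppose $\mathbb{A}$ is a periodic center annulus with $f^n(\mathbb{A}) = \mathbb{A}$. After replacing $n$ by a multiple, we may assume each boundary circle $\gamma$ of $\mathbb{A}$ is $f^n$-invariant. Since $\gamma$ is a $C^1$ closed curve tangent to $E^c$, the restriction $f^n|_\gamma$ is a $C^1$ circle endomorphism of some integer degree $d$; reading the induced action on $\pi_1(\TT)$ gives $A^n[\gamma] = d\,[\gamma]$. As $\lambda_1 \neq \lambda_2$, this forces $[\gamma]$ to be a rational eigenvector of $A$ and $d \in \{\lambda_1^n, \lambda_2^n\}$; in particular $\gamma$ is essential and $|d| > 1$. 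So $\gamma$ is a nontrivial $f^n$-invariant essential circle tangent to the center bundle along which $f^n$ acts as a degree-$d$ expanding circle endomorphism.

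The conclusion of \cite[Theorem~E]{HH-classification}, applied to this configuration, delivers distinct limits of the unstable cone intersection $\bigcap_{k \geq 0} Df^k(\cC^u(p_k))$ at points of $\gamma$, according to whether the pre-orbit $\hp$ eventually lies in $\mathbb{A}$ or eventually in $\TT \setminus \mathbb{A}$. Since $f$ has $|\det A| \geq 2$ pre-images at each point and $\TT \setminus \mathbb{A}$ is non-empty, both types of pre-orbits can be constructed at any $p \in \gamma$, yielding two distinct values of $\hat{E}^u_{\hp}$. This directly contradicts the definition of a specially partially hyperbolic endomorphism given in \Cref{special}.

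The main obstacle is the translation step just described: reading the geometric output of Hall and Hammerlindl's classification as the pre-orbit dependence of $\hat{E}^u$ in the sense of \eqref{unstable_direction}. Making this precise requires a careful look at the unstable cone construction in the proof of \cite[Theorem~E]{HH-classification}, identifying the ``two-sidedness'' of $\gamma$ inside $\mathbb{A}$, and confirming that the two distinct limiting line fields detected there are exactly what the pre-orbit definition sees. Once that correspondence is in place --- using the $f$-forward-invariance of $\mathbb{A}$ together with the continuity of $\cC^u$ and the fact that $f$ is a local diffeomorphism --- the conclusion is immediate.
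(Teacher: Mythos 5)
Your route coincides with the paper's at the top level --- the paper disposes of this proposition in one line, as a direct consequence of the statement of \cite[Theorem~E]{HH-classification} --- but your write-up does not actually use the statement of that theorem: it attributes to it a specific conclusion (that at points of the boundary circle the intersections $\bigcap_{k\geq 0} Df^k(\cC^u(p_k))$ converge to two different lines according to whether the pre-orbit eventually stays in $\A$ or eventually leaves it), and you yourself identify verifying this attribution, by inspecting the internals of the proof of Theorem~E, as the ``main obstacle''. That translation is the entire content of the proposition, so as written the proof is incomplete: either Theorem~E already asserts what is needed, in which case your degree computation and pre-orbit dichotomy are superfluous and one should simply quote it (as the paper does), or it does not, and your key claim is left unproven.

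The one step you do argue in detail is also insufficient, and it sits exactly where the expanding hypothesis has to do real work. That ``both types of pre-orbits can be constructed at any $p \in \gamma$'' does not follow from $f$ having $|\det A| \geq 2$ pre-images per point and $\TT \setminus \A \neq \emptyset$: nothing there prevents every pre-image of every point of $\overline{\A}$ from lying again in $\overline{\A}$, i.e.\ the annulus being backward invariant; excluding this is precisely Lemma~\ref{no_periodic_annulus}, which uses that no eigenvalue equals $\pm 1$. Without such an input your mechanism would apply verbatim to the example of \cite{He-Shi-Wang}: a \emph{specially} partially hyperbolic endomorphism which is dynamically incoherent --- hence, by Theorem~\ref{HH-class}, admits a periodic center annulus --- but whose linear part is not expanding; so at least one link in your chain must break in that example, and your sketch does not identify which link the expansion is supposed to repair. (A smaller point: for a non-invertible $f$, the relation $f^n(\A) = \A$ does not let you assume, even after passing to a power, that each boundary circle is $f^n$-invariant, since a boundary point may be mapped into the interior; the identity $A^n[\gamma] = d\,[\gamma]$ should instead be read off from the induced action of $f^n$ on $\pi_1(\A)$, and the essentiality of $\gamma$ from the fact that it is tangent to the nonvanishing line field $E^c$.)
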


By Theorem~\ref{HH-class}, $f$ is dynamically coherent and leaf conjugate to $A$. We fix $\cF^c$ as the center foliation. Let $E_A^u$ and $E_A^c$ be the eigenspaces corresponding to $\lambda_1$ and $\lambda_2$ respectively. We denote by $\tcA^u$ and $\tcA^c$ the foliations of $\RR$ by lines parallel to these spaces and by $\cA^u$ and $\cA^c$ the foliations they induce on $\TT$.

We denote by $\pi^u$ is the projection to $E_A^u$ whose whose kernel is $E^c_A$ and $\pi^c$ is the projection to $E_A^c$ whose kernel is $E_A^u$. We say that a foliation $\cF$ in $\RR$ is at a bounded distance from $\cA^c$ (respectively $\cA^u$) if there is some $M>0$ such that the length of $\pi^u(\mathcal{L})$ (resp. $\pi^c(\mathcal{L})$) is smaller than $M$ for every $\mathcal{L} \in \cF$.

Since the eigenvalues of $A$ are integers, $\cA^u$ and $\cA^c$ consist of circles. In particular, we also have that all the leaves of the center foliation $\cF^c$ of $f$ are also circles and, moreover, the leaves of $\tcF^c$ are at bounded distance from the lines of $\cA^c$.

As explained in \cite[Section 4.A]{Potrie}, every leaf of $\tcF^u$ is at a
bounded distance from a linear foliation on $\mathbb{R}^2$.  Since $\tcF^u$
is $f$-invariant, this foliation is $A$-invariant.
In our setting, there are two such foliations to choose from, i.e. $\tcA^c$ and
$\tcA^u$, and we need to take a closer look at $\tcF^u$ in order to see that only
the latter is possible. Similarily we will show that $\tcF^c$ is at a bounded
distance from $\tcA^c$.

Two
important concepts for
understanding foliations on $\mathbb{T}^2$ are Reeb components and Tannuli.
A \emph{Reeb component} of a foliation $\cF$ on $\TT$ is an annulus $\A$ such
that the restriction of $\cF$ to the closure of  $\A$ is homoeomorphic to one of the following:

\begin{enumerate}
	\item the foliation on $[-1,1]\times \mathbb{S}^1$
		induced by the foliation on $[-1, 1] \times \R$ given by the
		lines $\{-1\} \times \R$ and $\{1\} \times \R$, along with the
		graphs of the functions $ x \mapsto \exp(1/(1-x^2))+y$ with $y
		\in \mathbb{R}$. 
	\item the foliation on $\TT$ induced by the foliation on $S^1 \times
		\R$ obtained by identifying $\{-1\} \times \R$ with $\{1\} \times
		\R$ in case (1).
\end{enumerate}

A \emph{Tannulus component} (or simply \emph{tannulus}) is defined analogoulsy,
replacing the functions $x \mapsto \exp(1/(1-x^2))+y$ with $x \mapsto
\tan(\pi x/2) + y$. See Figures~\ref{fig.Reeb} and \ref{fig.tan}. 

By the classification of foliations on $\TT$ (see
\cite[Proposion~4.3.2]{Foliation-partA}), if a foliation does not admit Reeb
components then it is a suspension of a circle homeomorphism. Such a foliation may or may not contain a tannulus component.

\begin{figure}[htb!]
	\begin{minipage}[b]{0.45\linewidth}
		\centering
		\includegraphics[scale=0.8]{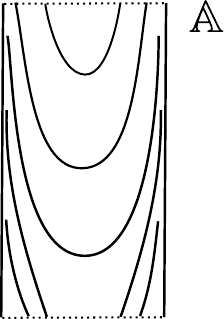}
		\caption{Reeb component}
		\label{fig.Reeb}
	\end{minipage}
	\begin{minipage}[b]{0.45\linewidth}
		\centering
		\includegraphics[scale=0.8]{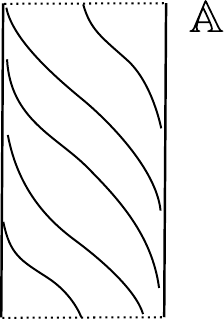}
		\caption{Tannulus}
		\label{fig.tan}
	\end{minipage}
\end{figure}

\begin{rmk}
	A foliation on $\mathbb{T}^2$ may have 
	infinitely many tannuli but it can have at most
	finitely many Reeb components. See \cite{Foliation-partA}.
\end{rmk}


A main ingredient is the following very general topological lemma. 

\begin{lemma} \label{no_periodic_annulus}
Let $f: \TT \to \TT$ be a self-cover. If there exists an annulus $\A$ and $n \geq 1$ such that  $\A = f^{-n}(\A)$, then the linear part of $f$ has an eigenvalue $\pm 1$.
\end{lemma}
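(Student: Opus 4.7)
My plan is to analyze the induced covering $f^n|_\A: \A \to \A$ and translate the topological hypothesis into a homological relation satisfied by the linear part $A$, from which the conclusion about eigenvalues will follow by elementary algebra.

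First, I would verify that $f^n(\A) = \A$ and that the restriction $f^n|_\A: \A \to \A$ is a covering map of the same degree $d^n$ as $f^n: \TT \to \TT$, where $d = |\det A|$. The equality $f^n(\A) = \A$ follows from surjectivity of $f^n$ together with the hypothesis $f^{-n}(\A) = \A$; the covering property is inherited from $f^n$ by restriction to the open saturated set $\A$; and the degree of the restriction equals $d^n$ because every preimage $f^{-n}(y)$ of $y \in \A$ lies in $\A$. Since $\pi_1(\A) \cong \Z$, the induced map $(f^n|_\A)_*$ on $\pi_1$ is multiplication by some integer $k$, and the standard degree-versus-index relation for connected covers gives $|k| = d^n$.

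Let $v_0 \in \Z^2 \cong H_1(\TT)$ be the homology class of the core circle of $\A$. Assuming $d > 1$ (the endomorphism case), $v_0$ must be non-zero: otherwise $\A$ would be contained in a disk $D \subset \TT$, and the preimage $f^{-n}(D)$ would split into $d^n > 1$ disjoint disks (since $D$ is simply connected), forcing $\A = f^{-n}(\A)$ to split into $d^n$ components, contradicting connectedness. Naturality of the inclusion $\A \hookrightarrow \TT$ with respect to $f^n|_\A$ and $f^n$, together with the identification $f_\star = A$ on $H_1(\TT)$, yields the key relation
\[
    A^n v_0 \;=\; k\, v_0.
\]

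Thus $v_0$ is an integer eigenvector of $A^n$ with eigenvalue $k$, and the other eigenvalue $k'$ satisfies $kk' = \det(A^n) = \pm d^n$, hence $|k'| = 1$. As $k'$ is a rational algebraic integer, $k' \in \{+1, -1\}$. To upgrade this to an eigenvalue of $A$ itself: if $A$ had a pair of non-real complex-conjugate eigenvalues, then $A^n$ would too, and its two eigenvalues would share the same modulus, contradicting $|k| = d^n > 1 = |k'|$. Hence $A$ has real eigenvalues $\lambda_1, \lambda_2$, and whichever of them raises to the $n$-th power to give $\pm 1$ must itself equal $\pm 1$, since the only real $n$-th roots of $\pm 1$ are $\pm 1$.

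The main technical obstacle lies in the first step: one must carefully verify that $f^n|_\A: \A \to \A$ is a covering of the prescribed degree despite $\A$ being non-compact, and that the index formula relating degree to the $\pi_1$-multiplier applies. Once this covering-space setup is firmly in place, the eigenvalue analysis is routine.
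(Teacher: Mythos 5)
Your proposal is correct and follows essentially the same route as the paper's own argument: restrict $f^n$ to the fully invariant annulus to obtain a self-cover of degree $|\det A|^n$, compare that degree with the index of the induced subgroup of $\pi_1(\A)\cong\Z$ via the relation $A^n v_0 = k v_0$ for the core class $v_0$, and use the determinant to force the complementary eigenvalue to be $\pm 1$ (your exclusion of complex eigenvalues and passage from $A^n$ to $A$ is a correct filling-in of details the paper leaves implicit). The one step you assert without justification --- that an annulus with null-homotopic core is contained in an open disk, so that full invariance under a degree $>1$ cover would force disconnectedness --- is a true (though not entirely trivial) engulfing fact, and this degenerate case is not even addressed in the paper's sketch, so it does not affect the correctness of your proof.
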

Since we are assuming that $f$ has expanding linear part, Lemma
\ref{no_periodic_annulus} implies that there cannot be a \emph{backward}
invariant annulus.

The proof of Lemma~\ref{no_periodic_annulus} follows
by the arguments used in \cite{A} and
\cite{WR-thesis}. In short, if $\A$ is a periodic annulus with $f^{-n}(\A)  =
\A$, then the restriction of $f^n$ to $\A$ is a self-cover of degree
$\lambda_1^n \cdot \lambda_2^n$. At the same time, if $i: \A \to \TT$ is the
inclusion map, then  $i_\star$ sends the fundamental group of $\A$ to a
subgroup of $\ZZ$ of the form $G = \{k v: k \in \Z\} \subset \ZZ$, where $v \in
\ZZ$ is an eigenvalue of the linear part of $f$. The action $f$ on $G$ produces
a subgroup whose index is on the one hand equal to $\lambda_1^n \cdot
\lambda_2^n$, and on the other equal to $\lambda_i^n$, where $\lambda_i$ is the
eigenvalue associated to $v$. Hence the other eigenvalue must be $\pm 1$.


Next, it is showed that $\cF^u$ is necessarily a suspension. 

\begin{lemma}\label{u-foliation}
The unstable foliation $\mathcal{F}^u$ has no Reeb component. 
\end{lemma}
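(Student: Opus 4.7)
The plan is to argue by contradiction: suppose that $\cF^u$ admits a Reeb component. By the remark preceding the lemma, the collection $\mathscr{R}$ of all Reeb components of $\cF^u$ is finite. I would like to produce a backward--periodic annulus out of a stable backward iterate and invoke Lemma~\ref{no_periodic_annulus} to derive a contradiction, since $|\lambda_1|,|\lambda_2|>1$ precludes any eigenvalue $\pm 1$.

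The first step is to verify that for every $\cR\in\mathscr{R}$, each connected component of $f^{-1}(\cR)$ is again a Reeb component. Since $f$ is a local diffeomorphism of the compact manifold $\TT$ it is a covering map of finite degree $|\det A|$; thus any connected component $U$ of $f^{-1}(\cR)$ is a finite--sheeted cover of the annulus $\cR$ and is itself an open annulus in $\TT$. The restriction of $\cF^u$ to $U$ is the pullback by $f|_U$ of the Reeb foliation on $\cR$, which preserves the Reeb template: the boundary circles of $U$ are finite covers of the boundary circles of $\cR$, hence closed unstable leaves, and interior leaves spiral toward both boundaries just as in $\cR$.

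Next, set $U_0=\bigcup_{\cR\in\mathscr{R}}\cR$. The previous step yields $f^{-1}(U_0)\subseteq U_0$, so the decreasing chain $U_0\supseteq f^{-1}(U_0)\supseteq f^{-2}(U_0)\supseteq\cdots$ consists of unions of elements of the finite set $\mathscr{R}$ and stabilizes at a nonempty open set $V=\cR_1\sqcup\cdots\sqcup\cR_k$ satisfying $f^{-1}(V)=V$. For each $i$ set $J_i=\{j:\cR_j\text{ is a connected component of }f^{-1}(\cR_i)\}$. Because $f(\cR_j)$ is connected and lies in $V$, it is contained in exactly one $\cR_i$, so each $j$ belongs to a unique $J_i$; combined with $|J_i|\geq 1$ (preimages are nonempty since $f$ is surjective), the $J_i$ partition $\{1,\ldots,k\}$, giving $\sum_i|J_i|=k$ and forcing $|J_i|=1$ for every $i$. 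Hence $F(\cR_j):=f(\cR_j)$ is a well--defined bijection on the finite set $\{\cR_1,\ldots,\cR_k\}$, and some iterate $F^N$ is the identity, which translates to $f^{-N}(\cR_1)=\cR_1$ as subsets of $\TT$. Lemma~\ref{no_periodic_annulus} applied to this backward--periodic annulus $\cR_1$ forces $A$ to have an eigenvalue $\pm 1$, contradicting $|\lambda_1|,|\lambda_2|>1$.

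The hard part will be the first step: one must carefully check that each component of $f^{-1}(\cR)$ really is a Reeb component, which amounts to verifying that the Reeb template is preserved under finite coverings of annuli (closed boundary leaves cover closed boundary leaves, and interior spiralling is preserved by local diffeomorphisms). A mild additional subtlety arises in case~(2) of the definition, where $\overline{\cR}=\TT$; here preimages generally split $\TT$ along several parallel closed leaves into a disjoint union of case--(1) Reeb components, but the subsequent counting and bijection arguments go through unchanged.
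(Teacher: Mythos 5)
Your proof is correct in outline and reaches the same contradiction as the paper, but you do more work than the paper does: the paper's proof is two lines, quoting Lemma~2.2 of He--Shi--Wang to conclude directly that a Reeb component $\A$ of $\cF^u$ satisfies $f^{-n}(\A)=\A$ for some $n>0$, and then invoking Lemma~\ref{no_periodic_annulus} exactly as you do. What you have done, in effect, is reprove that cited lemma from scratch: since $f$ is a finite covering preserving $\cF^u$, each component of the preimage of a Reeb component is again a Reeb component (pullback of the Reeb template under a finite cover of annuli), and then finiteness of the set of Reeb components plus your counting/permutation argument yields a backward-periodic Reeb component. The counting step is sound; note that it implicitly uses that distinct Reeb components are pairwise disjoint (so that $V$ really decomposes as $\cR_1\sqcup\cdots\sqcup\cR_k$ and each $f(\cR_j)$ lands in a unique $\cR_i$), a true but unstated fact, and that a fully rigorous version of your ``first step'' needs the homeomorphism with the Reeb model to be verified on the \emph{closure} of each preimage component (including your case-(2) splitting), which is standard covering-space bookkeeping but should be written out. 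So the trade-off is: the paper's route is short and leans on the literature, while yours is self-contained and makes the mechanism (invariance of the finite family of Reeb components under $f^{-1}$) explicit; both then hinge on the same key input, Lemma~\ref{no_periodic_annulus}, together with the expansion of the linear part.
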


\begin{proof}
	Suppose by contradiction that $\mathcal{F}^u$ contains a Reeb component
	$\mathbb{A}\subseteq \mathbb{T}^2$. Then, by
	\cite[Lemma~2.2]{He-Shi-Wang}, there is an integer $n>0$ such that
	$f^{-n}(\mathbb{A})=\mathbb{A}$.
	But that is impossible according to Lemma~\ref{no_periodic_annulus},
	since we are assuming that $f$ has expanding linear part.
\end{proof}

As we mentioned above, it follows from the classification of foliations on $\mathbb{T}^2$ 
that $\cF^u$ is a suspension.
Moreover, $\tcF^u$ has rational slope since its
leaves are a bounded distance from an eigenspace of $A$. Thus by the
classification of foliations on $\mathbb{T}^2$, either $\cF^u$ has a tannulus
or all the leaves of $\cF^u$ are circles.

\begin{lemma} \label{curva_transversal}
Let $\cF$ be a foliation of $\TT$ in which every leaf is a
circle. Then every leaf  of $\cF$ represents the same non-zero element
$v$ in $\ZZ$ (the fundamental group of $\TT$). Suppose, moreover, that $\gamma$ is a closed $C^1$ curve transverse to $\cF$. Then $[\gamma]$ is not a multiple of $v$.
\end{lemma}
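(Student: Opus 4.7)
My plan is to prove the two assertions in turn, first invoking the Reeb--Ehresmann fibration theorem for codimension-one foliations with compact leaves to produce the common homology class $v$, and then using the algebraic intersection pairing on $H_1(\TT)$ together with transversality to show that $[\gamma] \notin \Z v$.

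For the first assertion, I would apply the following classical result: a codimension-one foliation of a closed manifold in which every leaf is compact is a locally trivial fibration over $S^1$. In our setting, this produces a continuous surjection $p : \TT \to S^1$ whose fibers are the leaves of $\cF$. Since the fibers of a fiber bundle over a connected base are all isotopic, they share a common homology class $v \in H_1(\TT) = \ZZ$. To see $v \neq 0$, suppose a leaf $L$ were null-homologous; then $L$ bounds a disc $D \subset \TT$, and $\cF$ restricted to $D$ would foliate $D$ by circles tangent to $\partial D$. Applying the fibration theorem to $(D, \cF|_D)$ would force $D$ to fibre over an interval with $S^1$-fibres, i.e.\ $D \cong S^1 \times [0,1]$, contradicting the simple connectivity of $D$.

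For the second assertion, orient $\cF$ continuously so that the common class $v$ is well-defined as an oriented cycle. At every $t \in S^1$ in the parametrization of $\gamma$, transversality ensures that $\gamma'(t)$ together with the tangent to the leaf through $\gamma(t)$ forms a basis of $T_{\gamma(t)} \TT$. The sign of the determinant of this basis, relative to the orientation of $\TT$, varies continuously in $t$ and never vanishes, hence is constant. Therefore every transverse crossing of $\gamma$ with every leaf contributes the same sign to the algebraic intersection number, so $[\gamma] \cdot [L]$ equals $\pm$ the positive geometric count. Any leaf through a point of $\gamma$ is met by $\gamma$ at that point, so the geometric intersection number is positive, giving $[\gamma] \cdot v \neq 0$. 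But the intersection form on $\TT$ is alternating, so $v \cdot v = 0$; hence $[\gamma] = k v$ would yield $[\gamma] \cdot v = k(v \cdot v) = 0$, a contradiction.

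The step I expect to require the most care is the first assertion, specifically verifying that the Reeb--Ehresmann fibration theorem applies in the regularity relevant here (the unstable foliation is only $C^0$ with $C^1$ leaves). An alternative, more hands-on route to the common homology class would be to argue directly: adjacent circle leaves cobound embedded annuli inside foliated charts and thus have matching homology classes, and a connectedness argument on $\TT$ propagates this to every pair of leaves; the nullity obstruction can then be ruled out via a Poincar\'e--Hopf type argument applied to the tangent line field on a purported disc bounded by a leaf. I would fall back on this more elementary route if the regularity question proves delicate.
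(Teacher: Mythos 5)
Your argument is correct in substance but follows a genuinely different route from the paper. The paper never invokes a fibration theorem or the intersection form: it passes to the universal cover, uses the Poincar\'e--Bendixson theorem twice (a foliation of $\RR$ has no compact leaf, which gives $v \neq 0$; and a lifted leaf must meet the periodically extended lift $\tGamma$ of $\gamma$, since otherwise the tangent line field would be forced to have a singularity), and then rules out $[\gamma] \in \Z v$ by noting that the lift of a leaf is invariant under translation by $v$ while $\tGamma$ is invariant under translation by $[\gamma]$, so that proportionality would force infinitely many intersections. Your route --- compact leaves $\Rightarrow$ fibration over $S^1$, then $[\gamma]\cdot v \neq 0$ versus $v\cdot v = 0$ --- is homologically cleaner and yields both assertions at once, but it buys this at the cost of the Epstein/Reeb-type fibration theorem, whose hypotheses are indeed delicate at the regularity relevant here ($\cF^u$ and $\cF^c$ are only $C^0$ with $C^1$ leaves); your fallback (disjoint essential circles in $\TT$ have proportional, hence equal up to sign, primitive classes, plus an index argument on a disc bounded by a null-homologous leaf) removes that dependence and is closer in spirit to the paper's elementary arguments. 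Two points you should make explicit: (i) the constant-sign claim needs $\cF$ to be (transversely) orientable, which is true but not automatic --- it follows, e.g., because your fibration exhibits $\TT$ as the mapping torus of a circle homeomorphism that must preserve orientation (otherwise the total space would be a Klein bottle), so the fibers can be coherently oriented; the paper sidesteps this by working with the lifted line field on the simply connected $\RR$, where orientability is free; (ii) with these coherent orientations any two fibers cobound, so $[L]=v$ as oriented classes, which is what your identity $[\gamma]\cdot v = \pm(\text{geometric count})$ actually uses.
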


\begin{proof}
	Let $\mathcal{L}$ be a leaf of $\cF$ and write $v = [\cL]$. That $v$
	is non-zero can be deduced from the Poincaré-Benedixon Theorem (a
	foliation of $\RR$ cannot have a compact leaf). If $\cL'$ is another
	leaf then $[\cL']$ must be equal to $v$, for else
	$\cL$ and $\cL'$ would intersect.
	Fix some lift $\tgamma:[0,1] \to \RR$ of $\gamma$ and extend it
	periodically to $\tGamma:\R \to \RR$. We claim that $\mathcal{L}$
	intersects (the image of) $\tGamma$. Indeed, this also follows from the
	Poincaré-Benedixon Theorem  since if it were not true, then the vector field
	tangent to $\tcF$ would exhibit a singularity. 

	We now observe that $\tGamma(t+k)= \tGamma(t) + k [\gamma]$ for every $k \in \ZZ$ so that the image of  $\tGamma$ is invariant under translation by $[\gamma]$. 
Similarily, $\mathcal{L}$ is invariant by translation of $v$.  Hence $[\gamma]$ cannot
be a multiple of $v$. For if it were then $\mathcal{L}$ and  $\tGamma$ would have infinitely many intersections.
\end{proof}

\begin{lemma} \label{if_no_tannuli}
The lifts $\tcF^c$ and $\tcF^u$ are a bounded distance from $\tcA^c$ and $\tcA^u$, respectively.
\end{lemma}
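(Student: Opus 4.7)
The plan is to handle the two bounded-distance claims separately, with the center foliation essentially immediate from leaf conjugacy, and the unstable one treated by contradiction using Lemma~\ref{curva_transversal}.

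For $\tcF^c$, I rely on Theorem~\ref{HH-class}, which provides a leaf conjugacy $\psi : \TT \to \TT$ between $f$ and $A$, together with Remark~\ref{homotopic_to_id}, which says $\psi$ is homotopic to the identity. Consequently, any lift $\tilde\psi$ stays a bounded distance from $\id$, and by construction it sends each leaf of $\tcF^c$ onto a leaf of $\tcA^c$. This immediately yields a uniform bound on $\pi^u(\mathcal{L})$ for every leaf $\mathcal{L}$ of $\tcF^c$.

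For $\tcF^u$, the discussion preceding the lemma has already reduced matters to showing that the $A$-invariant linear foliation $\cG$ at bounded Hausdorff distance from $\tcF^u$ is $\tcA^u$ rather than $\tcA^c$. I argue by contradiction: assume $\cG = \tcA^c$, so every leaf of $\tcF^u$ sits inside a bounded-width tube around some vertical line. Because $\cF^u$ has no Reeb components (Lemma~\ref{u-foliation}) and has rational asymptotic direction, the classification of foliations on $\TT$ leaves only two possibilities, namely that every leaf of $\cF^u$ is a circle or $\cF^u$ contains a tannulus, and in both cases I can extract a closed leaf $\ell$ of $\cF^u$ (any leaf in the first case, a boundary circle in the second). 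Since $\cF^u$ and $\cF^c$ are transverse, the $C^1$ loop $\ell$ is transverse to $\cF^c$, whose leaves are circles of homology class $e_2$; applying Lemma~\ref{curva_transversal} with $\cF = \cF^c$ and $\gamma = \ell$ then forces $[\ell]$ to have a nonzero first coordinate. On the other hand, any lift $\tilde\ell \subset \RR$ is invariant under translation by $[\ell]$, and by assumption stays within some fixed Hausdorff distance of a vertical line. Iterating the translation by $[\ell]$ produces points of $\tilde\ell$ whose horizontal coordinate tends to $\pm\infty$, contradicting the bound. Hence $\cG = \tcA^u$, as claimed.

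The main obstacle I see is the tannulus case, since the interior leaves of a tannulus are not closed and so Lemma~\ref{curva_transversal} cannot be applied to them directly. The key observation that resolves this is to invoke Lemma~\ref{curva_transversal} with $\cF^c$ (rather than $\cF^u$) as the circle-leaved foliation and the tannulus boundary as the transverse closed curve, so that one only needs to know that the boundary of the tannulus is a closed leaf of $\cF^u$ transverse to $\cF^c$, which is automatic.
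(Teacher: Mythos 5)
Your proposal is correct and follows essentially the same route as the paper: the center part comes from the leaf conjugacy to $A$ (homotopic to the identity), and the unstable part identifies the $A$-invariant linear foliation by extracting a circle leaf of $\cF^u$ (all leaves circles, or a tannulus boundary) and applying Lemma~\ref{curva_transversal} with $\cF^c$ as the circle-leaved foliation. You merely make explicit the translation-invariance contradiction that the paper leaves implicit.
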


\begin{proof}
Recall that every leaf of $\tcF^u$ is a bounded distance from a translation of an
eigenspace of $A$. Since $\cF^u$ has a tannulus or all its leaves are circles, it is known that in both cases there is a circle as a leaf. Then, as such circle of $\cF^u$ is transverse to $\cF^c$, we can conclude by Lemma~\ref{curva_transversal} that this eigenspace cannot be $E_A^c$.  So it has to be $E_A^u$.
\end{proof}



A consequence of Lemmas~\ref{if_no_tannuli} is that the restriction of $\pi^c$
(resp. $\pi^u$) to $\tcF^c(\tp)$ (resp. $\tcF^u(\tp)$) is onto, so
$\tcF^c(\tp)$ and $\tcF^u(\tp)$ intersect each other. By the Poincaré-Bendixson
Theorem, we conclude that they intersect each other exactly
once. In other words, $\tcF^c$ and $\tcF^u$ have global product structure
and are \textit{quasi-isometric}. That is,
\begin{align}\label{eq1}
	\exists a,b>0 \ \ \text{such that} \ \ d_{\tcF^{\ast}}(\tp,\tq)\leq
	a\|\tp-\tq\|+b,
\end{align}
where $d_{\tcF^{\ast}}(\tp,\tq)$ denotes the distance between $\tp$ and $\tq$
along of a leaf of $\tcF^{\ast}$, for $\ast = c, u$.


\begin{lemma}\label{lema:leaf}
The map $\tldh$ sends leaves of $\tcF^c$ onto leaves of $\tcA^c$ and leaves of $\tcF^u$ onto leaves of $\tcA^u$. 
\end{lemma}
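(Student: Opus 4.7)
The plan is to combine three ingredients: the semiconjugacy identity $\tldh \circ \tf^n = A^n \circ \tldh$, the uniform bound $\|\tldh - \id\| \leq \kappa$ from \eqref{bounded_distance}, and the bounded distance between $\tcF^c, \tcF^u$ and $\tcA^c, \tcA^u$ established in Lemma~\ref{if_no_tannuli}. The key observation is that the $\tf$-invariance of the dynamical foliations forces two points on a common leaf to keep a uniformly bounded transverse displacement along their entire forward orbit. Pushing this bounded displacement through $\tldh$ and comparing with the exponential stretching of $A$ in the relevant eigendirection will force $\tldh$ to collapse the transverse component to zero.

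Concretely, fix a leaf $\cL$ of $\tcF^c$ and two points $\tp, \tq \in \cL$. By Lemma~\ref{if_no_tannuli} there is a uniform constant $M$ with $\|\pi^u(\tr - \tr')\| \leq 2M$ whenever $\tr, \tr'$ lie on the same leaf of $\tcF^c$. Since $\tf$ preserves $\tcF^c$, the points $\tf^n(\tp)$ and $\tf^n(\tq)$ remain on a common center leaf, so
\begin{equation*}
\|\pi^u(\tf^n(\tp) - \tf^n(\tq))\| \leq 2M \qquad \text{for every } n \geq 0.
\end{equation*}
Applying $\|\tldh - \id\| \leq \kappa$ on both sides, $\|\pi^u(\tldh(\tf^n(\tp)) - \tldh(\tf^n(\tq)))\| \leq 2M + 2\kappa\|\pi^u\|$. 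The semiconjugacy rewrites the left-hand side as $|\lambda_1|^n \, \|\pi^u(\tldh(\tp) - \tldh(\tq))\|$, since $E^u_A$ is the $\lambda_1$-eigenspace and $\pi^u$ commutes with $A$. Letting $n \to \infty$ and using $|\lambda_1| > 1$ forces $\pi^u(\tldh(\tp) - \tldh(\tq)) = 0$, so $\tldh(\tp)$ and $\tldh(\tq)$ lie on a common leaf of $\tcA^c$. The unstable case is identical after swapping $c \leftrightarrow u$: for $\tp, \tq$ on a common leaf of $\tcF^u$ the bound $\|\pi^c(\tf^n(\tp) - \tf^n(\tq))\| \leq 2M$ combined with $\pi^c \circ A^n = \lambda_2^n \pi^c$ forces $\pi^c(\tldh(\tp) - \tldh(\tq)) = 0$ since $|\lambda_2| > 1$.

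It remains to upgrade ``into'' to ``onto''. As recorded in the paragraph following Lemma~\ref{if_no_tannuli}, the global product structure of $\tcF^c$ and $\tcF^u$ ensures that $\pi^c$ restricted to any leaf of $\tcF^c$ is surjective onto $E^c_A$, and similarly for $\pi^u$ on leaves of $\tcF^u$. Since $\tldh$ is a bounded distance from the identity, $\pi^c(\tldh(\cL))$ (respectively $\pi^u(\tldh(\cL))$) is unbounded in both directions. Being a connected subset of a single line of $\tcA^c$ (resp. $\tcA^u$) that is unbounded both ways, $\tldh(\cL)$ must coincide with that whole line. The main obstacle is the first step: arranging the comparison so that $A$'s expansion acts in precisely the right direction to be annihilated by the transverse bound; once this setup is in place, the rest is routine bookkeeping with the constants $M$ and $\kappa$.
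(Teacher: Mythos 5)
Your argument is correct and follows essentially the same route as the paper's proof: both exploit that $\tf$-iterates of two points on a common leaf keep a uniformly bounded $\pi^u$ (resp.\ $\pi^c$) displacement (Lemma~\ref{if_no_tannuli} plus invariance of the foliations), and then play this against the expansion of $A$ by $|\lambda_1|^n$ (resp.\ $|\lambda_2|^n$) in the complementary eigendirection through the semiconjugacy and $\|\tldh-\id\|\leq\kappa$, the paper merely phrasing the bound via vertical strips rather than projections. You also make explicit the ``onto'' step (surjectivity of $\pi^c$, resp.\ $\pi^u$, on leaves together with connectedness of $\tldh(\cL)$), which the paper's proof leaves implicit; that is a welcome, correct addition rather than a different method.
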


\begin{proof}
Since  $\tcF^c$ is at a bounded distance from $\tcA^c$, there is a
constant $R>0$ such that for every $\tp \in \mathbb{R}^2$ we can find a
line $\mathcal{L} \in \tcA^c$ such that the leaf $\widetilde{\mathcal{F}}^c(\tp)$ is contained in $R$-neighbourhood of $\mathcal{L}$, which is an $R$-vertical strip. By \eqref{bounded_distance}, we have that $\|A^n\circ \tilh-\tf^n\|<\kappa$ for each integer $n$ and, thus, $A^n(\tilh(\widetilde{\mathcal{F}}^c(\tp)))$ is	contained in an	$(R+\kappa)$-vertical strip. 

Now, suppose that $\tq \in \tcF^c(\tp)$ and that $\tldh$ sends $\tp$ and $\tq$ to $(x_1, x_2)$ and $(y_1, y_2)$ in $\RR=E^u_A\oplus E^c_A$ respectively, with $x_1\neq y_1$. (Recall that we are assuming $A$ to be of the form \eqref{lower-triangular}, so that $\cA^c$ consists of vertical lines.) Then
	\[
		|\pi^u(A^n(x_1, x_2)) - \pi^u(A^n(y_1, y_2))| =|\lambda_1|^n |x_1-y_1|
	\] 
	gets arbitrarily large as $n$
	grows,	contradicting that $A^n(\tldh(\tcF^c(\tp)))$ is contained in
	a	$(R+\kappa)$-vertical strip. That proves that $\tldh$ sends leaves of
	$\tcF^c$ to lines in $\tcA^c$.
	The case of $\tcF^u$ is identical.
\end{proof}
	

\begin{lemma}\label{lemma-phi}
The map $\tilh$ sends distinct leaves of $\tcF^c$ to distinct lines of $\tcA^c$.
\end{lemma}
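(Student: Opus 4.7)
The plan is to argue by contradiction: assume that two distinct leaves $\tcF^c(\tp) \neq \tcF^c(\tq)$ are both sent by $\tldh$ onto the same line $L \in \tcA^c$. The strategy will be to use the global product structure of $\tcF^c$ and $\tcF^u$ to pair $\tp$ with a point $\tq'$ on $\tcF^c(\tq)$ lying on the same unstable leaf, deduce that $\tldh$ collapses the pair, and finally rule this out by combining unstable expansion with the quasi-isometry \eqref{eq1}.

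First I would locate the pair. Using the global product structure established before the statement, let $\tq'$ be the unique point in $\tcF^u(\tp) \cap \tcF^c(\tq)$. By Lemma~\ref{lema:leaf}, $\tldh(\tcF^u(\tp))$ is a line $M \in \tcA^u$, while $\tldh(\tp), \tldh(\tq') \in L$. Since $E_A^u \neq E_A^c$, the transverse linear foliations $\tcA^u$ and $\tcA^c$ intersect each pair of leaves in a single point, so $L \cap M$ is a single point and therefore $\tldh(\tp) = \tldh(\tq')$. So the whole argument reduces to showing that $\tldh$ is injective on each leaf of $\tcF^u$.

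Next I would prove that injectivity. Suppose $\tp_1 \neq \tq_1$ lie on a common leaf of $\tcF^u$ with $\tldh(\tp_1) = \tldh(\tq_1)$. From $\tldh \circ \tf = A \circ \tldh$ and $\|\tldh - \id\| \leq \kappa$ we obtain $\|\tf^n(\tp_1) - \tf^n(\tq_1)\| \leq 2\kappa$ for every $n \geq 0$. Because $f$ is specially partially hyperbolic, $E^u$ is forward $Df$-invariant, so $\tf^n(\tp_1)$ and $\tf^n(\tq_1)$ remain on a common leaf of $\tcF^u$. The quasi-isometry inequality \eqref{eq1} then yields
\[
d_{\tcF^u}(\tf^n(\tp_1),\tf^n(\tq_1)) \leq a\|\tf^n(\tp_1)-\tf^n(\tq_1)\| + b \leq 2a\kappa + b,
\]
uniformly in $n$. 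On the other hand, partial hyperbolicity gives $\|Df^\ell v\| \geq \lambda \|v\|$ for $v \in E^u$, so unstable arclength grows exponentially under iteration, contradicting the uniform bound. Hence $\tp_1 = \tq_1$, which gives $\tp = \tq'$, contradicting $\tcF^c(\tp) \neq \tcF^c(\tq)$.

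The main obstacle, as I see it, is the injectivity of $\tldh$ along unstable leaves. The heuristic is transparent -- unstable arcs lengthen while $\tldh$-fibres stay bounded -- but making it rigorous relies critically on the forward invariance of $E^u$ under $Df$, which is exactly what the specially partially hyperbolic hypothesis provides. Without that hypothesis, $E^u$ would depend on the choice of pre-orbit and the iterates of a single unstable leaf would not be well defined, so the expansion argument could not be invoked.
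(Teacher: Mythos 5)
Your proof is correct and follows essentially the same route as the paper: use the global product structure to place points of the two center leaves on a common unstable leaf, deduce that their forward $\tf$-orbits stay a bounded distance apart, and contradict this with the quasi-isometry \eqref{eq1} together with uniform expansion along $E^u$. The only (cosmetic) difference is that you invoke Lemma~\ref{lema:leaf} for unstable leaves to get the exact collapse $\tldh(\tp)=\tldh(\tq')$, whereas the paper bounds the $\pi^u$- and $\pi^c$-components of $\tf^n(\tq_1)-\tf^n(\tq_2)$ separately using the bounded-distance property of $\tcF^u$ from $\tcA^u$.
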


\begin{proof}
	We argue by contradiction. Suppose there are distinct leaves, say
	$F_1$ and $F_2$, of $\tcF^c$ which are sent to the same line by $\tldh$.
	Then for every $\tq_1 \in F_1$ and every $\tq_2 \in F_2$ we have
	$\pi^u(\tilh(\tq_1))=\pi^u(\tilh(\tq_2))$ and so
	$\|\pi^u(\tf^n(\tq_1))-\pi^u(\tf^n(\tq_2))\|$
	is bounded for $n \geq 0$. By the global product structure, we
	can choose $\tq_1$ and $\tq_2$ in the same leaf of $\tcF^u$. Since
	$\tcF^u$ is at a bounded distance from $\tcA^u$, we have that
	$\|\pi^c(\tf^n(\tq_1))-\pi^c(\tf^n(\tq_1))\|$ is also bounded
	for $n \geq 0$. Hence $\|\tf^n(\tq_1)-\tf^n(\tq_2)\|$ is
	bounded for $n \geq 0$. But that is impossible since $\tq_1$ and $\tq_2$
	are in the same unstable leaf which is quasi-isometric. 
\end{proof}

A consequence of Lemma~\ref{lemma-phi} is that $\tphi(\tp)$ is contained in
$\tcF^c(\tp)$ for every $\tp \in \RR$. Proposition~\ref{prop-Phi} then implies
that $\tphi(\tp)$ must be eiter a point or a compact line segment in
$\tcF^c(\tp)$.

\begin{lemma}\label{cor:stripe}
Suppose that $\cF^u$ has no tannulus. If $\phi(p) \neq \{p\}$, then the interior of $h^{-1}(\cA^u(h(p)))$ is an annulus which is either wandering or periodic for $f$.
\end{lemma}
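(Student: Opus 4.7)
The plan is to study $B_p := h^{-1}(\cA^u(h(p)))$ in the universal cover and then read off the dynamics. Let $\tp$ lift $p$, set $L := \tcA^u(\tldh(\tp))$ and $\tB := \tldh^{-1}(L)$, so $B_p = \pi(\tB)$. By Lemma~\ref{lema:leaf}, $\tldh$ maps $\tcF^u$-leaves onto lines of $\tcA^u$; hence $\tB$ is $\tcF^u$-saturated. Combining the global product structure (each leaf of $\tcF^u$ meets $\tcF^c(\tp)$ in exactly one point) with the fact that $L \cap \tcA^c(\tldh(\tp)) = \{\tldh(\tp)\}$, one gets $\tB \cap \tcF^c(\tp) = \tphi(\tp)$, and consequently
\[ \tB \;=\; \bigsqcup_{\tq \in \tphi(\tp)} \tcF^u(\tq) \;\cong\; \tphi(\tp) \times \R. \]
The hypothesis $\phi(p) \neq \{p\}$, together with the connectedness of $\tphi(\tp)$ given by Proposition~\ref{prop-Phi}, ensures that $\tphi(\tp)$ is a non-degenerate compact arc, so $\tB$ is a closed strip of positive width.

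Next, I descend to $\TT$. The subgroup of $\ZZ$ preserving $\tB$ consists of the $v \in \ZZ$ with $L+v = L$, that is, $\ZZ \cap E_A^u$; since $\lambda_1 \in \Z$, this subgroup is the infinite cyclic group $\Z v_u$ generated by a primitive integer vector $v_u \in E_A^u$. Therefore
\[ B_p \;\cong\; \tB / \Z v_u \;\cong\; \tphi(\tp) \times S^1 \]
is a closed annulus, whose interior $\operatorname{int}(B_p) \cong \operatorname{int}(\tphi(\tp)) \times S^1$ is an open annulus of $\TT$.

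For the dynamics, the identity $h \circ f = A \circ h$ yields $f^n(B_p) = h^{-1}(A^n \cA^u(h(p)))$ for every $n \geq 1$. Because $\cA^u$ is a foliation of $\TT$ by pairwise disjoint circles, $\cA^u(h(p))$ and $A^n\cA^u(h(p))$ are either equal or disjoint, so $B_p \cap f^n(B_p)$ equals $B_p$ or is empty for each $n \geq 1$. If no $n \geq 1$ produces equality, then $B_p$---and hence its interior---is wandering. Otherwise $f^n(B_p) = B_p$ for some $n \geq 1$; since $f^n$ is an open map respecting the unstable foliation, it descends to a continuous surjection $\tau : \tphi(\tp) \to \tphi(\tp)$ on the space of unstable circles of $B_p$, satisfying $\tau(\operatorname{int}(\tphi(\tp))) \subseteq \operatorname{int}(\tphi(\tp))$. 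Surjectivity combined with this inclusion forces $\tau$ to permute the endpoints of $\tphi(\tp)$ and satisfy $\tau(\operatorname{int}(\tphi(\tp))) = \operatorname{int}(\tphi(\tp))$, whence $f^n(\operatorname{int}(B_p)) = \operatorname{int}(B_p)$, and the interior is periodic.

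The main obstacle is the last assertion in the periodic case: one must verify that $\operatorname{int}(B_p)$ is mapped \emph{onto} itself and not merely into itself. This reduces to the elementary topological fact that a continuous surjection of an arc onto itself which carries the interior into the interior must permute the two endpoints, but it requires combining the openness of $f^n$ with the fact that $f^n$ is compatible with the unstable foliation structure of $B_p$.
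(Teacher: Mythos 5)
Your structural argument is essentially the paper's: both identify $h^{-1}(\cA^u(h(p)))$ with the $\cF^u$-saturation of $\phi(p)$. The paper does this directly on $\TT$, using that (in the absence of tannuli) the circle leaves of $\cF^u$ are the fibers of a trivial bundle $\TT \to S^1$ and writing the set as the preimage of the arc traced out by $\phi(p)$ in the base; you run the same computation in the universal cover, decomposing $\tldh^{-1}(\tcA^u(\tldh(\tp)))$ as the union of the unstable lines through $\tphi(\tp)$ and then quotienting by the stabilizer $\Z v_u$. That part is correct, and your treatment of the wandering-or-periodic dichotomy (which the paper leaves implicit) is welcome extra detail; in particular the endpoint-permutation argument giving $f^n(\operatorname{int}B_p)=\operatorname{int}B_p$ once $f^n(B_p)=B_p$ is fine.

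The one place you assert rather than prove is exactly where the ``onto'' content sits: the identity $h\circ f=A\circ h$ by itself yields only $f^n(B_p)\subseteq h^{-1}(A^n\cA^u(h(p)))$, not equality, and without the reverse inclusion the surjectivity of your induced map $\tau$ in the periodic case (which is what lets you permute the endpoints) is not yet established --- note that the genuine obstacle is this surjectivity, not the endpoint combinatorics you flag at the end. The missing inclusion is true and follows in a couple of lines from the lifted picture you already set up: the lift $\tf$ is a homeomorphism of $\RR$ with $\tldh\circ\tf^{-1}=A^{-1}\circ\tldh$. Given $y$ with $h(y)\in A^n\cA^u(h(p))$, pick a lift $\ty$ of $y$; after adjusting by a deck translation (using $\tldh(\ty+v)=\tldh(\ty)+v$ for $v\in\ZZ$) one may assume $\tldh(\ty)$ lies on the line $A^n\tcA^u(\tldh(\tp))$. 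Then $\tx:=\tf^{-n}(\ty)$ satisfies $\tldh(\tx)=A^{-n}\tldh(\ty)\in\tcA^u(\tldh(\tp))$, so $\tx$ projects to a point $x\in B_p$ with $f^n(x)=y$. The wandering case only uses the easy inclusion and is unaffected. With this line added, your proof is complete.
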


\begin{proof}
Since $\cF^u$ has no tannulus, the leaves of $\cF^u$ are cirlces so we may consider fibers of a trivial bundle $ \pi : \TT \to S^1$ whose fibers are the leaves of $\cF^u$. The set $\phi(p)$ is a transversal segment to the fibers and $h$ sends $\cF^u(x)$ to $\cA^u(h(p))$ for every $x \in \phi(p)$. Hence $h^{-1}(\cA^u(h(p))$ is equal to $\pi^{-1}(\pi(\phi(p)))$.
\end{proof}

\begin{proof}[Proof of Theorem~\ref{thm-B}]
The implication \ref{f_conjugated} $\Longrightarrow$ \ref{f_transitive} is
obvious. To see why \ref{f_transitive} $\Longrightarrow$
\ref{f_no_annulus}, first note that a transitive map may not have a wandering
open set of any kind. Suppose that $f$ has a periodic annulus $\A = f^n(\A)$ for
some $n\geq 1$. Then, by transitivity of $f$, we must have $f^{-n}(\A) = \A$. Indeed, if it were not so, $f^{-n}(\A)$
would consist of a union of several annuli, some of which would be wandering.
But Lemma~\ref{no_periodic_annulus} says that it is impossible to have a backward invariant annulus when the linear part is expanding.

It remains to show that \ref{f_no_annulus} implies \ref{f_conjugated}.
Note that $h$ is a conjugacy between $f$ and $A$ if and only if $\phi(p) = \{p\}$ for every $p\in \TT$. (A continuous bijection on a compact space is a homeomorpism.) Thus, by Lemma~\ref{cor:stripe}, it suffices to show that if $f$ does not admit a wandering or periodic annulus, then $\cF^u$ does not admit a tannulus. Suppose it does admit a tannulus $\A$. Then $f^n(\A)$ would be a tannulus for every $n\geq 0$. Moreover, $\A$ and $f^n(\A)$ must either coincide or be disjoint. Hence $\A$ must be either wandering or periodic.
\end{proof}

\section{An example}   Here we present a non-trivial example of an endomorphism
satifying the hypotheses of Theorem \ref{thm-A}. More precisely, we  construct
a $C^\infty$ local diffeomorphism $f: \TT \to \TT$ satisfying
\begin{enumerate}
	\item the linear part of $f$ is $A = \left( \begin{smallmatrix} 5 & 0
		\\ 0 & 2 \end{smallmatrix} \right)$,
	\item $\det Df(x,y)>5$ for every $(x,y) \in \TT$,  \label{sve}
	\item $f$ is partially hyperbolic, and
	\item $f$  has a hyperbolic fixed point with stable index $1$ and is
		therefore $C^1$ persistently not conjugated to $A$.
		\label{not_conjugated}
		\end{enumerate}
		By Theorem~\ref{thm-A}, $f$ is robustly transitive. The example
		is a skew-product, but all properties are  robust, so the
		construction leads implicitly to examples which are not
		skew-products. They are, however, topologically conjugated to
		skew-products. But that is unavoidable according to
		\cite{HH-classification} (see Theorem \ref{HH-class}).

Here's the construction. Let $\alpha: \T \to \T$ and $\beta: \TT \to \T$ be
given by 
\begin{align}  
	\alpha(x)  &= 5x + \frac{\sin(2 \pi x) }{2 \pi}  \\ 
	\beta(x,y)  &= 2y - (1+\epsilon)\cos^{2}(\pi x) \frac{ \sin(2 \pi y)}{2 \pi}
\end{align}
and take $f(x,y)   = (\alpha(x), \beta(x,y))$. Clearly $f$ is a well defined
$C^\infty$ map on $\TT$ homotopic to $A$. That it is a local diffeomorphism
will follow as soon as we have proved item (\ref{sve}) above.  The derivative
of $f$ at $(0,0)$ is given by
\[\left(\begin{matrix}
		6 & 0 \\
		0 & 1-\epsilon
\end{matrix}\right)\] which is hyperbolic with stable index $1$ for every
$\epsilon>0$. This property persists under $C^1$ perturbations and guarantees
that neither $f$ nor its neighbours are conjugated to $A$. To se why 
(\ref{sve}) holds, note that the
Jacobian 
\begin{equation*}
	J(x,y) =  | \det Df(x,y)| = \left( 5+\cos(2 \pi x) \right)
	\left( 2-(1+\epsilon)\cos^2(\pi x)\cos(2\pi y)\right)
\end{equation*}
is $C^\infty$ on $\TT$ and that 
\begin{equation*}
	\partial_y J = 2 \pi (1+\epsilon)(5+\cos(2 \pi x))
	\cos^2(\pi x) \sin(2 \pi y)
\end{equation*}
vanishes only on $x=\half$, $y=\half$, and $y=0$. It therefore suffices to
check that $J$ is greater than $5$ along these three curves.
\begin{itemize}
	\item On $x = \half$ we have $J(\half, y) \equiv 8$.
	\item On $y=\half$ we have $J(x,\half) = \left( 5+\cos(2 \pi x) \right)
		\left(2+(1+\epsilon)\cos^2(\pi x) \right) \geq 8$.
	\item On $y=0$ we have \begin{align*} J(x,0) & =
			\left(5+\cos(2 \pi x) \right)
			\left( 2-(1+\epsilon) \cos^2(\pi x) \right)     \\
		      & = 6+2 \sin^2(\pi x) (2- \sin^2(\pi x))
		      - \epsilon \cos^2(\pi x) (5+\cos(2 \pi x))      \\
		      & \geq 6- 6 \epsilon,
		\end{align*}
		which is greater that $5$ for every $\epsilon < 1/6$. That
		proves (\ref{sve}).
	\end{itemize}

Finally let us verify that $f$ is partially hyperbolic. For that, fix
some $p \in \TT$ and let  $(u_1, u_2)= Df_p (1,1)$, $(w_1,w_2) = Df_p (1,-1)$.
We claim that
\begin{equation}
	u_1  = w_1 \geq 4 \label{u_1},
\end{equation}
\begin{equation}
	1-\epsilon \leq u_2  \leq 3+\epsilon, \label{u_2}
\end{equation}
and
\begin{equation}
	-3-\epsilon \leq w_2  \leq 1+\epsilon.
	\label{w_2}
\end{equation}
Once that is shown, it follows that the cone
\begin{equation*}
	S = \{ (v_1, v_2) \in \RR\setminus \{(0,0)\} : |v_1| \geq |v_2| \}
\end{equation*}
is strictly $Df_p$-invariant at every $p \in \TT$ as long as
$\epsilon<1$.
The estimate in \eqref{u_1} also shows that vectors in $S$ are expanded
by $Df_p$ by a factor of at least $2 \sqrt{2}$. This is because
\begin{equation*}
	\max_{0 \leq t \leq 1} \|t(1,1) + (1-t)(1,-1)\| = \sqrt{2}
\end{equation*}
while
\begin{equation*}
	\min_{0\leq t \leq 1} \|t Df_p (1,1) + (1-t) Df_p (1,-1)\|
	\geq 4
\end{equation*}
for every $p$, and every $v \in S$ is a multiple of a vector of this
type.

It remains to prove \eqref{u_1}, \eqref{u_2}, and \eqref{w_2}.
For that, let us write $p = (x,y)$. 
Then inequality \eqref{u_1} is immediate, as
\begin{equation}
	u_1 = w_1 = \partial_x \alpha(x) = 5+\cos(2 \pi x).
	\label{horizontal_expansion}
\end{equation}
The inequalities in \eqref{u_2} follows by rewriting $u_2$ as
\begin{align*}
	u_2 & = \partial_x \beta(x,y) + \partial_y \beta(x,y)               \\
	    & = \left(\epsilon + 1\right) \sin{\left(\pi x \right)}
	\sin{\left(2 \pi y \right)} \cos{\left(\pi x \right)}               \\
	    & + 2 - \left(\epsilon + 1\right) \cos^{2}{\left(\pi x \right)}
	\cos{\left(2 \pi y \right)}                                         \\
	    & = 2- ( \cos(2 \pi y) + \cos(2 \pi (x+y)))/2                   \\
	    & - \epsilon \cos(\pi x) \cos(\pi(x+2y)).
\end{align*}
One can rewrite $w_2$ in a similar fashion to obtain \eqref{w_2}.



\bibliographystyle{alpha}
\bibliography{Thesis-Biblio}

\end{document}